\newtheorem{thm}{Theorem}[section]
\newtheorem{cor}[thm]{Corollary}
\newtheorem{lem}[thm]{Lemma}
\newtheorem{prop}[thm]{Proposition}
\theoremstyle{definition}
\newtheorem{rem}[thm]{Remark}
\newcommand{\zC}{\mathbb C}
\newcommand{\zR}{\mathbb R}
\newcommand{\zN}{\mathbb N}
\newcommand{\zK}{\mathbb K}
\newcommand{\HH}{\mathcal H}
\begin{document}
\baselineskip=.75cm

\title[Symmetric multilinear forms on Hilbert spaces]{Symmetric multilinear forms on Hilbert spaces: where do they attain their norm?}

\thanks{This project was supported in part by CONICET PIP 11220130100329CO, ANPCyT PICT 2015-2299 and UBACyT 20020130100474. }

\author[D. Carando]{Daniel Carando}
\address{Departamento de Matem\'{a}tica - Pab I,
Facultad de Cs. Exactas y Natu\-ra\-les, Universidad de Buenos Aires,
(1428) Buenos Aires, Argentina and IMAS-CONICET}
\email{dcarando@dm.uba.ar}

\author[J. T. Rodr\'{i}guez]{Jorge Tom\'as Rodr\'{i}guez}
\address{Departamento de Matem\'{a}tica and NUCOMPA, Facultad de Cs. Exactas, Universidad Nacional del Centro de la Provincia de Buenos Aires, (7000) Tandil, Argentina and CONICET}
\email{jtrodrig@dm.uba.ar}

\begin{abstract} We characterize the sets of norm one vectors $\mathbf{x}_1,\ldots,\mathbf{x}_k$ in a Hilbert space $\mathcal H$ such that there exists a $k$-linear symmetric form attaining its norm at $(\textbf{x}_1,\ldots,\mathbf{x}_k)$. We prove that in the bilinear case, any two vectors satisfy this property. However, for $k\ge 3$ only collinear vectors satisfy this property in the complex case, while in the real case this is equivalent to $\mathbf{x}_1,\ldots,\mathbf{x}_k$ spanning a subspace of dimension at most 2. We use these results to obtain some applications to symmetric multilinear forms, symmetric tensor products and the exposed points of the unit ball of $\mathcal L_s(^k\mathcal{H})$.
\end{abstract}

\maketitle

\section*{Introduction}
In the recent years, there has been an increasing interest in the study of norm-attaining linear and multilinear operators, as well as other nonlinear mappings. In these different settings, versions of (and counterexamples to) the classical results of Bishop, Phelps, Bollob\'as and Lindenstrauss have appeared, taking into account the particularities of the different classes of mappings under consideration (see for example \cite{acosta2006denseness, acosta2008bishop, aron2015bishop, dantas2017bishop} and the references therein). In this work, we approach the question of norm-attainment from a different point of view: we characterize the $k$-tuples of vectors in a Hilbert space where a symmetric multilinear form attains its norm at it. In order to be more precise, let us introduce some definitions.

In what follows, $\mathcal H$ denotes a Hilbert space over $\zK$, where $\zK$ stands for either the field of complex numbers $\zC$ or the field of real numbers $\zR$. Given a continuous $k$-linear form $T: \underbrace{\mathcal H\times\cdots\times \mathcal H}_k \rightarrow  \zK$, its norm is given by
$$\Vert T\Vert := \sup \{|T(\mathbf{w}_1,\ldots,\mathbf{w}_k)|:\Vert \mathbf{w}_1\Vert, \ldots,\Vert \mathbf{w}_k\Vert \leq 1\}.$$
With this norm, the space $\mathcal L _s(^k\mathcal H)$  of all continuous \emph{symmetric} $k$-linear forms on $\mathcal H$ is a Banach space.
We say that a nonzero $T\in \mathcal L _s(^k\mathcal H)$ \emph{attains its norm} if there are norm one vectors $\mathbf{x}_1,\ldots,\mathbf{x}_k\in \mathcal H$ such that
\begin{equation}\label{def-nat}\Vert T\Vert=|T(\mathbf{x}_1,\ldots,\mathbf{x}_k)|.\end{equation}
In this case we say that $T$ attains its norm at $(\mathbf{x}_1,\ldots,\mathbf{x}_k)$.

A classical result on Hilbert spaces  asserts that the norm of a symmetric $k$-linear form coincides with that of its associated $k$-homogeneous polynomial. This was already shown by Banach in \cite{banach1938homogene} (see also \cite{bochnak1971polynomials}, \cite[Proposition 1.44]{LibroDi99} and, for a more constructive proof, \cite{pappas2007norm}). In other words, if $T\in  \mathcal L _s(^k\mathcal H)$, then
\begin{equation}\label{eq-isometria}\Vert T\Vert =\sup \{|T(\mathbf{w},\ldots,\mathbf{w})|:\Vert \mathbf{w}\Vert \leq 1\}.\end{equation}
From \eqref{eq-isometria} it is not hard to see that a norm-attaining symmetric $k$-linear form attains its norm at some $k$-tuple of the form $(\mathbf{w},\ldots,\mathbf{w})$. To see this, it is enough to reduce the problem to a finite dimensional setting (taking the span of the vectors satisfying \eqref{def-nat}). But in this case, a natural question arises: is  it possible for such a $k$-linear form to attain its norm at some other  vectors $ \mathbf{x}_1,\ldots,\mathbf{x}_k$ which are not collinear?
For example, the bilinear form $$T(\mathbf{x},\mathbf{y})=x_1 y_1-x_2y_2$$ attains its norm at any pair of norm one vectors of the form $(a,b), (a,-b)$, which can obviously be chosen not to be collinear. As we see in Lemma~\ref{lema bilineales}, any bilinear form on $\mathbb K^2$ attaining its norm in non-collinear vectors looks like $T$ in some appropriate basis.

Therefore, the aim of this article is to characterize the vectors $ \mathbf{x}_1,\ldots,\mathbf{x}_k$ for which there exists some symmetric $k$-linear form attains its norm at $ (\mathbf{x}_1,\ldots,\mathbf{x}_k)$.
We show in Proposition~\ref{bilineales} that if $k=2$, any two vectors will work. However, for $k\ge 3$ the situation is different, as stated in Theorem~\ref{teorema_principal}: in the complex case, the vectors $ \mathbf{x}_1,\ldots,\mathbf{x}_k$ must be collinear, while in the real case they must span a subspace of dimension at most 2. These conditions are necessary and sufficient.

The article is organized as follows. We state our main result (Theorem~\ref{teorema_principal}) in Section~1. We also  present and prove some consequences: a Bollob\'as-like result on where multilinear forms \emph{almost} attain their norm and some applications to complexification, symmetric tensor products and geometry of spaces of multilinear forms.
The proof of Theorem~\ref{teorema_principal} will be given along Sections 3 and 4. In Section 2 we deal with bilinear forms, while $k$-linear forms will be treated in Section 3 (in the complex case) and Section  4 (in the real case).



\section{Main results and applications}\label{seccion principal}

The following is our main result.
\begin{thm}\label{teorema_principal} Let $k\ge 3$ and $\mathbf{x}_1,\ldots,\mathbf{x}_k$ be norm one vectors on a  Hilbert space $\mathcal H$ over $\zK$.
There exists a symmetric $k$-linear form $T$ on $\mathcal H$  attaining its norm at $(\mathbf{x}_1,\ldots, \mathbf{x}_k)$ if and only if $$\dim (\operatorname{span}\{\mathbf{x}_1,\ldots,\mathbf{x}_k\})= \begin{cases} 1 \quad\quad\quad \text{ if }\zK=\zC \\ 1 \text{ or }2 \quad\text{ if }\zK=\zR.\end{cases}
$$
\end{thm}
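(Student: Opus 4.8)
The plan is to prove the two implications separately, treating sufficiency first since it is the easier direction. For sufficiency I would exhibit explicit forms. If $\dim(\operatorname{span}\{\mathbf{x}_1,\ldots,\mathbf{x}_k\})=1$, I write each $\mathbf{x}_i=\lambda_i\mathbf{x}$ with $|\lambda_i|=1$ and take $T(\mathbf{w}_1,\ldots,\mathbf{w}_k)=\prod_{i=1}^k\langle \mathbf{w}_i,\mathbf{x}\rangle$; this is symmetric, Cauchy--Schwarz gives $\|T\|=1$, and $|T(\mathbf{x}_1,\ldots,\mathbf{x}_k)|=\prod_i|\lambda_i|=1$. For the real case with $\dim=2$, I would identify $\operatorname{span}\{\mathbf{x}_1,\ldots,\mathbf{x}_k\}$ isometrically with $\zR^2$, so that $\mathbf{x}_j$ corresponds to $(\cos\theta_j,\sin\theta_j)$, and define $T(\mathbf{w}_1,\ldots,\mathbf{w}_k)=\operatorname{Re}\big(e^{-i\alpha}\prod_{j=1}^k(u_j+iv_j)\big)$, where $\mathbf{w}_j\leftrightarrow(u_j,v_j)$ and $\alpha=\sum_j\theta_j$. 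This is a genuinely real-valued symmetric $k$-linear form, the identity $|u_j+iv_j|=\|\mathbf{w}_j\|$ forces $\|T\|\le 1$, and the choice of $\alpha$ makes $T(\mathbf{x}_1,\ldots,\mathbf{x}_k)=1$. Both constructions show the stated dimensions are attainable.

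For necessity I would first reduce to finite dimensions. Since the $\mathbf{x}_i$ all lie in $E:=\operatorname{span}\{\mathbf{x}_1,\ldots,\mathbf{x}_k\}$, restricting $T$ to $E$ cannot increase the norm yet still attains the value $\|T\|$ at $(\mathbf{x}_1,\ldots,\mathbf{x}_k)$, so I may assume $\mathcal H=E$. After multiplying $T$ by a unimodular scalar, set $M:=\|T\|=T(\mathbf{x}_1,\ldots,\mathbf{x}_k)>0$. Freezing every variable but the $i$-th produces a linear functional of norm $\le M$ (directly from the definition of $\|T\|$) attaining the value $M$ at the unit vector $\mathbf{x}_i$; strict convexity of the Hilbert ball (Riesz representation together with the equality case in Cauchy--Schwarz) then forces the first-order conditions
\[ T(\mathbf{x}_1,\ldots,\mathbf{x}_{i-1},\mathbf{w},\mathbf{x}_{i+1},\ldots,\mathbf{x}_k)=M\,\langle \mathbf{w},\mathbf{x}_i\rangle \qquad (i=1,\ldots,k) \]
for all $\mathbf{w}\in E$. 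These rigidity relations are the engine of the whole argument.

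With them in hand I would induct on $k$. For $k\ge 4$, fixing the last slot at $\mathbf{x}_k$ gives a symmetric $(k-1)$-linear form $S(\mathbf{w}_1,\ldots,\mathbf{w}_{k-1})=T(\mathbf{w}_1,\ldots,\mathbf{w}_{k-1},\mathbf{x}_k)$ with $\|S\|=M$ attaining its norm at $(\mathbf{x}_1,\ldots,\mathbf{x}_{k-1})$; performing this for each slot and invoking the induction hypothesis shows that every $(k-1)$-element subset of $\{\mathbf{x}_1,\ldots,\mathbf{x}_k\}$ is collinear (complex) or spans at most a plane (real). A short linear-algebra step upgrades this to the full family: if the whole set spanned dimension $\ge 2$ (resp. $\ge 3$), then two (resp. three) independent vectors among them would together lie in some $(k-1)$-subset (omit an index distinct from the chosen ones, which is possible because $k\ge 4$), contradicting the low dimension of that subset. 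Thus the inductive step reduces everything to the base case $k=3$.

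The base case is where I expect the real difficulty, and where the dichotomy between $\zC$ and $\zR$ emerges. Fixing the third slot yields a symmetric bilinear form $B(\mathbf{u},\mathbf{v})=T(\mathbf{u},\mathbf{v},\mathbf{x}_3)$ attaining its norm at $(\mathbf{x}_1,\mathbf{x}_2)$, and symmetrically for the pairs $(\mathbf{x}_1,\mathbf{x}_3)$ and $(\mathbf{x}_2,\mathbf{x}_3)$; Lemma~\ref{lema bilineales} places each of these in the normal form $M(z_1w_1-z_2w_2)$ relative to an orthonormal basis in which the attaining pair is $(a\mathbf e_1+b\mathbf e_2,\,a\mathbf e_1-b\mathbf e_2)$. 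Feeding the first-order conditions above into these normal forms pins down the entries of the Gram matrix $(\langle\mathbf{x}_i,\mathbf{x}_j\rangle)$: in the complex case one already finds, at the level of a single pair, that the coefficients $a,b$ are forced to be real. The \textbf{main obstacle} is then to show that the three pairwise normal forms cannot be made simultaneously consistent with the symmetry of $T$ unless the Gram matrix has rank $1$, i.e.\ the vectors are collinear. Over $\zR$ the same consistency analysis is looser and permits rank $2$, which is precisely the configuration realized by the sufficiency construction, so there the task becomes to rule out rank $3$. I expect this simultaneous-consistency step for $k=3$ to be the crux of the theorem, with the first-order conditions and the bilinear classification of Lemma~\ref{lema bilineales} as the essential inputs.
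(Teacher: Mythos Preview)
Your sufficiency argument is correct and, in the real two-dimensional case, is considerably slicker than the paper's. The paper (Proposition~\ref{prop vuelta caso real}) builds the required form through a three-stage process: Chebyshev-type polynomials for the vectors $\pm\mathbf e_1,\pm\mathbf e_2$, then an inductive dyadic bisection to reach angles $\pi l/2^n$, then a compactness limit. Your one-line formula $T(\mathbf{w}_1,\ldots,\mathbf{w}_k)=\operatorname{Re}\bigl(e^{-i\alpha}\prod_j(u_j+iv_j)\bigr)$ (extended to $\mathcal H$ via orthogonal projection onto the span) does the job immediately; in fact the paper's polynomials $P,Q$ of Lemma~\ref{lema polinomios norma 1} are exactly $\operatorname{Re}(x_1+ix_2)^k$ and $\operatorname{Im}(x_1+ix_2)^k$, so the two constructions are cousins and you have found the cleaner packaging.

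For necessity your scaffolding matches the paper's: the first-order identities $T(\mathbf{x}_1,\ldots,\mathbf{w},\ldots,\mathbf{x}_k)=M\langle\mathbf{w},\mathbf{x}_i\rangle$ are precisely what the paper exploits (equations~(\ref{ecuacion e1 x3})--(\ref{ecuacion e3 x3}) in Proposition~\ref{prop ida caso real}), and your reduction to $k=3$ by freezing a slot is the same as the paper's. The gap is that the base case $k=3$, which you rightly flag as the crux, is not actually carried out, and the work there is substantial. In the complex case (Proposition~\ref{prop caso complejo}) the paper does \emph{not} argue via the Gram matrix as you outline: it first reduces to $\mathbf{x}_1=\mathbf{x}_2$ using Lemma~\ref{lema bilineales}, then shows that in the special basis both matrices $A_i:=[T(\cdot,\cdot,\mathbf{f}_i)]_F$ are unitary and that every unit combination $\alpha A_1+\beta A_2$ is again unitary, forcing $A_1A_2^*=0$, impossible for unitaries. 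In the real case (Proposition~\ref{prop ida caso real}) the paper plugs the first-order identities into further test directions $\mathbf{x}_3\pm\mathbf{e}_i$ and extracts a numerical contradiction on a single parameter $t$. Your proposed route through ``simultaneous consistency of the three pairwise normal forms'' may well succeed, but as written it stops at the point where the real argument begins; also, your aside that in the complex case the coefficients $a,b$ are already forced to be real ``at the level of a single pair'' does not follow from the first-order conditions alone (one only gets $\mathbf{x}_2=\bar a\mathbf f_1-\bar b\mathbf f_2$ from $\mathbf{x}_1=a\mathbf f_1+b\mathbf f_2$), so that step would need more care.
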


It is not hard to see that Theorem~\ref{teorema_principal} also holds for vector valued multilinear forms. Indeed, suppose that $E$ is a Banach space and $T\in \mathcal L _s(^k\mathcal H,E)$ attains its norm at $(\mathbf{x}_1,\ldots, \mathbf{x}_k)$. If we take a norm one linear function $\varphi\in E^*$, with $\varphi (T(\mathbf{x}_1,\ldots, \mathbf{x}_k))=\Vert T(\mathbf{x}_1,\ldots, \mathbf{x}_k)\Vert$, then $\varphi\circ T \in \mathcal L _s(^k\mathcal H)$ also attains its norm at $(\mathbf{x}_1,\ldots, \mathbf{x}_k)$, so the \emph{only if} part follows. For the \emph{if} part, just multiply the multilinear form whose existence is guaranteed by the theorem by any nonzero vector in $E$.

As we will see in Proposition~\ref{bilineales}, for any pair of vectors in $\mathcal H$ there exists a bilinear form attaining its norm in that pair. It is interesting to note that  this characterizes real Hilbert spaces (see Proposition  2.9 of \cite{benitez1993characterization}).

As we mentioned above, Sections \ref{seccion compleja} and \ref{seccion real} will be devoted to the proof of the Theorem~\ref{teorema_principal} in the complex and real case respectively. While the \emph{if} part is trivial in the complex case (where all the vectors are collinear), it is rather involved in the real case, when they span a 2-dimensional subspace.

Now we prove some consequences of our main theorem. The first one is a quantitative approximate version of Theorem~\ref{teorema_principal}, much in the Bollob\'as spirit: if $T$ almost attain its norm at $\mathbf{x}_1,\ldots,\mathbf{x}_k$, then the vectors $\mathbf{x}_1,\ldots,\mathbf{x}_k$ must be close to a 2-dimensional subspace (or, in the complex case, to a 1-dimensional subspace).

\begin{cor} For each $k\ge 3$ and  $\varepsilon >0$ there exist $\beta=\beta(k,\varepsilon)>0$  
such that the following holds:
 if $T$ is a norm one symmetric $k$-linear form on $\mathcal H$ and $\mathbf{x}_1,\ldots,\mathbf{x}_k$ are norm one vectors in $\mathcal H$  with $$\vert T(\mathbf{x}_1,\ldots,\mathbf{x}_k) \vert > 1 - \beta,$$ then there exist a subspace $\mathcal V\subset \operatorname{span}\{\mathbf{x}_1,\ldots,\mathbf{x}_k\}$ of dimension at most 2 (which is 1-dimensional in the complex case) such that the distance of each $\mathbf{x}_j$ to $\mathcal V$ is at most $\varepsilon$.
\end{cor}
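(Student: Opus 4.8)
The plan is to prove this corollary by a compactness/contradiction argument that upgrades the exact statement of Theorem~\ref{teorema_principal} to its quantitative form. The key observation is that the \emph{if and only if} character of the main theorem means that a $k$-tuple failing the spanning condition simply \emph{cannot} support a norm-attaining symmetric form; the corollary asserts that $k$-tuples at which some norm one form comes within $\beta$ of its norm must be quantitatively close to satisfying that condition.

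\medskip

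First I would reduce to a finite-dimensional setting. Given $T$ and $\mathbf{x}_1,\ldots,\mathbf{x}_k$, the relevant data lives in the span of these vectors (together with the values $T$ takes there), which has dimension at most $k$. So it suffices to prove the statement with $\mathcal H$ replaced by $\ell_2^n$ (or $\zK^n$) for $n\le k$; the parameter $\beta$ may then be chosen depending only on $k$ and $\ve$, uniformly over $n\le k$. This reduction is important because the subsequent compactness argument requires finite dimension.

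\medskip

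Next, suppose for contradiction that the conclusion fails for some fixed $k$ and $\ve>0$. Then for every $\beta=1/m$ there exist a norm one symmetric $k$-linear form $T_m$ on $\zK^n$ and norm one vectors $\mathbf{x}_1^{(m)},\ldots,\mathbf{x}_k^{(m)}$ with $|T_m(\mathbf{x}_1^{(m)},\ldots,\mathbf{x}_k^{(m)})|>1-1/m$, yet such that no subspace $\mathcal V$ of the span of dimension at most $2$ (resp.\ $1$ in the complex case) has all the $\mathbf{x}_j^{(m)}$ within distance $\ve$. By compactness of the unit sphere of $\zK^n$ and of the unit sphere of the finite-dimensional space $\LL_s(^k\zK^n)$, I would pass to a subsequence along which $T_m\to T$ and each $\mathbf{x}_j^{(m)}\to\mathbf{x}_j$, all convergences in norm. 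Continuity of the evaluation map gives $|T(\mathbf{x}_1,\ldots,\mathbf{x}_k)|=1=\|T\|$, so $T$ attains its norm at $(\mathbf{x}_1,\ldots,\mathbf{x}_k)$. By Theorem~\ref{teorema_principal}, the limit vectors $\mathbf{x}_1,\ldots,\mathbf{x}_k$ span a subspace $\mathcal V$ of dimension at most $2$ (resp.\ $1$). Since $\mathbf{x}_j^{(m)}\to\mathbf{x}_j\in\mathcal V$, for $m$ large the distance from each $\mathbf{x}_j^{(m)}$ to $\mathcal V$ is below $\ve$, contradicting the choice of the tuples.

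\medskip

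The main obstacle, and the point requiring the most care, is the matching between the limiting subspace $\mathcal V=\operatorname{span}\{\mathbf{x}_1,\ldots,\mathbf{x}_k\}$ and the requirement $\mathcal V\subset\operatorname{span}\{\mathbf{x}_1^{(m)},\ldots,\mathbf{x}_k^{(m)}\}$ stated in the corollary: the limiting subspace need not literally sit inside the span of the $m$-th approximating tuple. I would address this by noting that distances are what matter, and that a small perturbation of a subspace (in the gap metric on Grassmannians, which is controlled here by $\|\mathbf{x}_j^{(m)}-\mathbf{x}_j\|$) can be realized by a nearby subspace genuinely contained in the approximating span, at the cost of enlarging $\ve$ by a controlled amount; alternatively one absorbs this into the compactness argument directly by taking $\mathcal V_m$ to be an at-most-$2$-dimensional subspace of the $m$-th span that best approximates the limit directions. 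Keeping this adjustment quantitative, so that the single constant $\beta(k,\ve)$ emerges cleanly, is the delicate step.
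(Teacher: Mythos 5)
Your proposal is correct, but it takes a genuinely different route from the paper. The paper does not argue by compactness and contradiction: it invokes the Bishop--Phelps--Bollob\'as property for symmetric $k$-linear forms on finite-dimensional spaces \cite[Proposition 2.3]{dantas2017bishop}, applied on $\mathcal{H}_1:=\operatorname{span}\{\mathbf{x}_1,\ldots,\mathbf{x}_k\}$ to the normalized restriction $T_1:=T|_{\mathcal{H}_1}/\Vert T|_{\mathcal{H}_1}\Vert$; choosing $\beta=\eta(\varepsilon)$ from that property, one gets a norm one symmetric form $S$ and norm one vectors $\mathbf{z}_1,\ldots,\mathbf{z}_k\in\mathcal{H}_1$ with $|S(\mathbf{z}_1,\ldots,\mathbf{z}_k)|=1$ and $\Vert \mathbf{z}_j-\mathbf{x}_j\Vert<\varepsilon$, and Theorem~\ref{teorema_principal} applied to $S$ gives $\mathcal{V}=\operatorname{span}\{\mathbf{z}_1,\ldots,\mathbf{z}_k\}$. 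Note that this handles automatically the containment $\mathcal{V}\subset\operatorname{span}\{\mathbf{x}_1,\ldots,\mathbf{x}_k\}$ which you correctly single out as the delicate point of your argument, because the perturbed vectors $\mathbf{z}_j$ are produced inside $\mathcal{H}_1$. Your compactness argument buys self-containedness (nothing is cited beyond Theorem~\ref{teorema_principal}) at the price of a non-constructive $\beta$; since the finite-dimensional Bishop--Phelps--Bollob\'as property is itself typically proved by compactness, the quantitative loss is minor, and in effect you are inlining a proof of the special case of that property that is needed here. Finally, the step you flag as delicate admits a resolution simpler than gap metrics or best-approximating subspaces: the limit space satisfies $\mathcal{V}=\operatorname{span}\{\mathbf{x}_{j_1},\mathbf{x}_{j_2}\}$ for some indices $j_1,j_2$, so you may take $\mathcal{V}_m:=\operatorname{span}\{\mathbf{x}_{j_1}^{(m)},\mathbf{x}_{j_2}^{(m)}\}$, which genuinely lies in the $m$-th span; writing each limit vector as $\mathbf{x}_j=a_j\mathbf{x}_{j_1}+b_j\mathbf{x}_{j_2}$ with coefficients independent of $m$, the triangle inequality gives
$$\operatorname{dist}\bigl(\mathbf{x}_j^{(m)},\mathcal{V}_m\bigr)\leq \Vert \mathbf{x}_j^{(m)}-\mathbf{x}_j\Vert + |a_j|\,\Vert \mathbf{x}_{j_1}-\mathbf{x}_{j_1}^{(m)}\Vert + |b_j|\,\Vert \mathbf{x}_{j_2}-\mathbf{x}_{j_2}^{(m)}\Vert \longrightarrow 0,$$
which contradicts your standing assumption for $m$ large and completes your argument cleanly.
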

\begin{proof}
Finite dimensional Banach spaces enjoy the Bishop-Phelps-Bollob\'as property for symmetric $k$-linear forms \cite[Proposition 2.3]{dantas2017bishop}. Then, given $k$ and $\varepsilon$ as in the statement, let $\beta=\eta(\varepsilon)$, where $\eta$ corresponds to the Bishop-Phelps-Bollob\'as property for symmetric $k$-linear forms on a Hilbert space of dimension (at most) $k$. Set $\mathcal{H}_1:=\operatorname{span}\{\mathbf{x}_1,\ldots,\mathbf{x}_k\}$ 
and consider $T_1:=\frac{T|_{\mathcal{H}_1}}{\|T|_{\mathcal{H}_1}\|}$. Then, $T_1$ is a norm one $k$-linear form on a finite dimensional Hilbert space satisfying $$\vert T_1(\mathbf{x}_1,\ldots,\mathbf{x}_k) \vert > 1 - \beta=1-\eta(\varepsilon).$$

The Bishop-Phelps-Bollob\'as property for symmetric $k$-linear forms and the definition of $\eta$ gives a norm one symmetric $k$-linear form $S$ and norm one vectors $\mathbf{z}_1,\ldots,\mathbf{z}_k$ in $\mathcal{H}_1$ such that $$|S(\mathbf{z}_1,\ldots,\mathbf{z}_k)|=1\quad\text{and}\quad \|\mathbf{z}_j- \mathbf{x}_j\|<\varepsilon\quad\text{for }j=1,\dots,k.$$
Since $S$ attains its norm at $\mathbf{z}_1,\ldots,\mathbf{z}_k$, Theorem~\ref{teorema_principal} implies that $\mathcal{V}=\operatorname{span}\{ \mathbf{z}_1,\ldots,\mathbf{z}_k \} $ has dimension at most 2 in the real case (dimension 1 in the complex one) and the result follows.
\end{proof}

Another consequence of Theorem~\ref{teorema_principal} is related to the symmetric projective tensor norm in Hilbert spaces. For an introduction to this topic and the notation we use next,  we refer the reader to Floret's survey \cite{floret1997natural}.  Since the space of  $k$-homogeneous polynomials $ \mathcal P (^k\mathcal H)$ is isometric to the space of symmetric $k$-linear forms $\mathcal L _s(^k\mathcal H)$ (see \eqref{eq-isometria}), the symmetric projective tensor norm of $\mathbf{x}_1\vee\cdots \vee\mathbf{x}_k$ can be computed as follows:
\begin{eqnarray*}
\pi_s(\mathbf{x}_1\vee\cdots \vee\mathbf{x}_k, \otimes^{k,s} \mathcal H) &=&\max \{|P(\mathbf{x}_1\vee\cdots \vee\mathbf{x}_k)|: P\in \mathcal P (^k\mathcal H),\  \Vert P \Vert =1\} \\
&=& \max \{|T(\mathbf{x}_1,\ldots,\mathbf{x}_k)|: T\in \mathcal L _s(^k\mathcal H),\  \Vert T \Vert =1\}.
\end{eqnarray*}
This means that $\mathbf{x}_1\vee\cdots \vee\mathbf{x}_k$ is a norm one element of $\otimes_{\pi_s}^{k,s} \mathcal H$ if and only if there exists a symmetric $k$-linear form attaining its norm at $(\mathbf{x}_1,\ldots,\mathbf{x}_k)$. Now we can apply Theorem ~\ref{teorema_principal} to obtain the following.

\begin{prop}\label{prop tensores} Let $k\ge 3$ and  let $\mathbf{x}_1,\ldots,\mathbf{x}_k $ be norm one vectors in $\mathcal H$. Then,
$\pi_s(\mathbf{x}_1\vee\cdots \vee\mathbf{x}_k, \otimes^{k,s} \mathcal H)=1$  if and only if $$\dim (\operatorname{span}\{\mathbf{x}_1,\ldots,\mathbf{x}_k\})= \begin{cases} 1 \quad\quad\text{ in the complex case} \\  1 \text{ or }2 \quad\text{ in the real case}\end{cases}.
$$
\end{prop}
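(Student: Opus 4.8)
The plan is to combine the duality identity displayed immediately before the statement with Theorem~\ref{teorema_principal}; essentially all the real content is already packed into that theorem, so the proof is a translation through the tensor--form pairing. Concretely, the computation preceding the proposition gives
\[
\pi_s(\mathbf{x}_1\vee\cdots \vee\mathbf{x}_k, \otimes^{k,s} \mathcal H)=\sup \{|T(\mathbf{x}_1,\ldots,\mathbf{x}_k)|: T\in \mathcal L _s(^k\mathcal H),\ \Vert T \Vert =1\},
\]
and since each $\mathbf{x}_j$ has norm one, this quantity is at most $1$. Thus the entire statement reduces to showing that the supremum equals $1$ \emph{exactly} when the dimension condition holds, and the natural way to do this is to recognize that the supremum is $1$ precisely when it is attained by some norm one $T$, i.e. precisely when some norm one symmetric $k$-linear form attains its norm at $(\mathbf{x}_1,\ldots,\mathbf{x}_k)$.

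The one point that requires care, and which I regard as the main (though minor) obstacle, is justifying that this supremum is in fact a maximum, so that ``$=1$'' is genuinely equivalent to norm-attainment rather than merely to approximate attainment. I would handle this by a finite-dimensional reduction onto $F:=\operatorname{span}\{\mathbf{x}_1,\ldots,\mathbf{x}_k\}$. On one side, any norm one $T\in \mathcal L _s(^k\mathcal H)$ restricts to $T|_F$ with $\Vert T|_F\Vert\le 1$ and the same value at $(\mathbf{x}_1,\ldots,\mathbf{x}_k)$. On the other side, letting $P$ be the orthogonal projection of $\mathcal H$ onto $F$, any $S\in \mathcal L _s(^kF)$ extends to $\tilde S(\mathbf{w}_1,\ldots,\mathbf{w}_k):=S(P\mathbf{w}_1,\ldots,P\mathbf{w}_k)$, which is symmetric, satisfies $\Vert \tilde S\Vert =\Vert S\Vert$ (since $\Vert P\Vert=1$ forces $\Vert\tilde S\Vert\le\Vert S\Vert$, while $\tilde S|_F=S$ gives the reverse inequality), and agrees with $S$ at $(\mathbf{x}_1,\ldots,\mathbf{x}_k)$. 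Hence the supremum above coincides with the corresponding supremum over the unit ball of the finite-dimensional space $\mathcal L _s(^kF)$, which is compact; so it is attained, and any maximizer on $F$ extends back to a norm one form on $\mathcal H$ attaining its norm at $(\mathbf{x}_1,\ldots,\mathbf{x}_k)$.

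Finally I would invoke Theorem~\ref{teorema_principal}, which characterizes the existence of such a norm-attaining symmetric $k$-linear form exactly through $\dim(\operatorname{span}\{\mathbf{x}_1,\ldots,\mathbf{x}_k\})$ being $1$ in the complex case and $1$ or $2$ in the real case. Chaining the three equivalences — $\pi_s=1$ iff the supremum is attained, iff a norm one symmetric form attains its norm at the tuple, iff the dimension condition holds — yields the proposition. I expect no essential difficulty beyond the attainment argument, since the geometric heart of the statement has already been settled in Theorem~\ref{teorema_principal}.
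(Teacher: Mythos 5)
Your proof is correct and takes essentially the same route as the paper: the duality identity displayed before the statement, combined with Theorem~\ref{teorema_principal}, with both directions following exactly as you chain them. The only difference is bookkeeping: the paper writes the supremum directly as a $\max$ (which is justified because $\mathcal P(^k\mathcal H)$ is the dual of $\otimes_{\pi_s}^{k,s}\mathcal H$, so a norming functional for the fixed tensor exists by Hahn--Banach), whereas your reduction to $\operatorname{span}\{\mathbf{x}_1,\ldots,\mathbf{x}_k\}$ via the orthogonal projection is a valid, self-contained alternative justification of that same attainment step.
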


The previous proposition has a straightforward implication in complexification (see \cite{munoz1999complexifications}) and symmetric tensor products.

\begin{cor}  Let $\mathcal{H}$ be a real Hilbert space and  $\widetilde{\mathcal{H}}$ be the complex Hilbert space obtained from $\mathcal{H}$ by some complexification procedure.  If $k\ge 3$ and $\mathbf{x}_1,\ldots,\mathbf{x}_k\in \mathcal{H}$ are nonzero vectors satisfying
$$\dim (\operatorname{span}\{\mathbf{x}_1,\ldots,\mathbf{x}_k\}) = 2,$$
then $$\pi_s(\mathbf{x}_1\vee\cdots \vee\mathbf{x}_k, \otimes^{k,s} \mathcal H)  > \pi_s(\mathbf{x}_1\vee\cdots \vee\mathbf{x}_k, \otimes^{k,s} \widetilde{\mathcal H}).$$
\end{cor}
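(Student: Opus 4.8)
The plan is to deduce this corollary directly from Proposition~\ref{prop tensores} together with a strict-inequality argument distinguishing the real and complex tensor norms. First I would normalize, setting $\mathbf{y}_j:=\mathbf{x}_j/\|\mathbf{x}_j\|$, so that the $\mathbf{y}_j$ are norm-one vectors spanning the same $2$-dimensional real subspace as the $\mathbf{x}_j$; by the multilinearity of the symmetric tensor product, $\mathbf{x}_1\vee\cdots\vee\mathbf{x}_k=\big(\prod_j\|\mathbf{x}_j\|\big)\,\mathbf{y}_1\vee\cdots\vee\mathbf{y}_k$, so the claimed strict inequality for the $\mathbf{x}_j$ is equivalent to the same strict inequality for the $\mathbf{y}_j$ after cancelling the common positive factor. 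Hence it suffices to treat norm-one vectors.

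Next I would observe that the span of the $\mathbf{y}_j$ inside $\widetilde{\mathcal H}$, viewed as a \emph{complex} subspace, has complex dimension $2$ (the real $2$-dimensional subspace generates a complex plane), so the vectors remain non-collinear over $\zC$. Applying Proposition~\ref{prop tensores} in the complex space $\widetilde{\mathcal H}$: since $\dim_{\zC}(\operatorname{span}_{\zC}\{\mathbf{y}_1,\ldots,\mathbf{y}_k\})=2\neq 1$, the complex symmetric projective norm satisfies
$$\pi_s(\mathbf{y}_1\vee\cdots\vee\mathbf{y}_k,\otimes^{k,s}\widetilde{\mathcal H})<1.$$
Applying the same proposition in the real space $\mathcal H$, where $\dim_{\zR}(\operatorname{span}_{\zR}\{\mathbf{y}_1,\ldots,\mathbf{y}_k\})=2$ falls in the admissible real range, gives
$$\pi_s(\mathbf{y}_1\vee\cdots\vee\mathbf{y}_k,\otimes^{k,s}\mathcal H)=1.$$
Comparing the two displays yields the strict inequality for the $\mathbf{y}_j$, and rescaling by $\prod_j\|\mathbf{x}_j\|$ recovers the statement for the $\mathbf{x}_j$.

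The only genuine subtlety, and the step I would be most careful with, is the compatibility of the two symmetric projective tensor norms across the complexification—that is, checking that $\pi_s(\,\cdot\,,\otimes^{k,s}\widetilde{\mathcal H})$ restricted to real tensors agrees with (or is dominated by) a quantity governed by the \emph{complex} norm-attainment criterion, so that the real elementary tensor $\mathbf{y}_1\vee\cdots\vee\mathbf{y}_k$ is legitimately being measured in $\otimes^{k,s}\widetilde{\mathcal H}$ via complex symmetric $k$-linear forms. This is exactly the computation recorded before Proposition~\ref{prop tensores}, now read over $\widetilde{\mathcal H}$; once one grants that the identity
$$\pi_s(\mathbf{y}_1\vee\cdots\vee\mathbf{y}_k,\otimes^{k,s}\widetilde{\mathcal H})=\max\{|T(\mathbf{y}_1,\ldots,\mathbf{y}_k)|:T\in\mathcal L_s({}^k\widetilde{\mathcal H}),\ \|T\|=1\}$$
holds verbatim in the complexified space, the strict inequality is immediate from the dichotomy in Proposition~\ref{prop tensores}. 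The referencing of \cite{munoz1999complexifications} in the statement signals that this identification of the ambient tensor norm with the complex one is the point to invoke, rather than to reprove.
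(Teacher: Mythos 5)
Your proof is correct and follows essentially the same route as the paper: reduce to norm-one vectors, then apply both cases of Proposition~\ref{prop tensores} to get $\pi_s=1$ over $\mathcal H$ and $\pi_s<1$ over $\widetilde{\mathcal H}$. You merely make explicit two points the paper leaves implicit (the scaling argument for the normalization, and the fact that a real $2$-dimensional span has complex dimension $2$ in $\widetilde{\mathcal H}$, so the complex dichotomy forces the norm below $1$), which is fine.
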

\begin{proof}
It is clear that is enough to show the result for norm one vectors. But in this case, the real and complex cases of the previous proposition give that $\pi_s(\mathbf{x}_1\vee\cdots \vee\mathbf{x}_k, \otimes^{k,s} \mathcal H)=1$ while $\pi_s(\mathbf{x}_1\vee\cdots \vee\mathbf{x}_k, \otimes^{k,s} \widetilde{\mathcal H})$ cannot be one (and must be, therefore, less than one).
\end{proof}

In \cite[Lemma 6.2]{friedland2014nuclear} the authors compute the symmetric projective norm  for the tensor $\mathbf{e}_1\vee \mathbf{e}_1\vee \mathbf{e}_2$ both in ${\mathcal H}$ and $\widetilde{\mathcal H}$, showing that they are different. In \cite[Example 6.5]{nie2017symmetric},  this computation is done for $\mathbf{e}_1\vee \mathbf{e}_1 \vee \mathbf{e}_2 \vee \mathbf{e}_2$.
The previous Corollary provides an easy procedure to get examples for which $\pi_s(\mathbf{x}_1\vee\cdots \vee\mathbf{x}_k, \otimes^{k,s} \mathcal H)$ and  $\pi_s(\mathbf{x}_1\vee\cdots \vee\mathbf{x}_k, \otimes^{k,s} \widetilde{\mathcal H})$ do not coincide.

Note that, in the conditions of the previous corollary, if $T\in \mathcal L _s(^k\mathcal H)$ attains its norm at $(\mathbf{x}_1,\ldots,\mathbf{x}_k)$ and  $\widetilde{T}$ is its complexification, then $\Vert \widetilde{T}\Vert > \Vert T\Vert$.

\medskip

The last one is not an application of Theorem \ref{teorema_principal}  but of the auxiliary result Lemma  \ref{lema bilineales}. It concerns the exposed points of $\mathcal L _s(^k\mathcal H)$ for  real two-dimensional Hilbert spaces. For more results regarding the geometry of the unit ball of $\mathcal L _s(^k\mathcal H)$ we refer the reader to the articles \cite{grecu2004geometry, grecu2009unit, kim2003exposed} and the references therein.

\begin{prop}\label{prop-exposed} Let $\mathcal H$ be a  two-dimensional real Hilbert space, $k\geq 2$ and $T\in  \mathcal L _s(^k\mathcal H)$ of norm one attaining its norm at $(\mathbf{x}_1,\ldots,\mathbf{x}_k)$ with $$\operatorname{span}\{\mathbf{x}_1,\ldots,\mathbf{x}_k\} = \mathcal{H}.$$
Then $T$ is a exposed point of the unit ball $\mathcal L _s(^k\mathcal H)$.
\end{prop}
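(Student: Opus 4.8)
The plan is to exhibit a single linear functional on $\mathcal L_s(^k\mathcal H)$ that attains its maximum over the unit ball \emph{only} at $T$. Since $\mathcal H$ is two-dimensional, $\mathcal L_s(^k\mathcal H)$ is finite dimensional and its dual is isometrically $\otimes^{k,s}_{\pi_s}\mathcal H$, so a functional is a symmetric tensor $z$ acting by $\langle F,z\rangle=F$ evaluated on the corresponding vectors. After replacing $T$ by $-T$ if necessary I may assume $T(\mathbf x_1,\ldots,\mathbf x_k)=1$. The natural candidate is $z=\mathbf x_1\vee\cdots\vee\mathbf x_k$: by Proposition~\ref{prop tensores} the hypothesis $\operatorname{span}\{\mathbf x_1,\ldots,\mathbf x_k\}=\mathcal H$ gives $\pi_s(z)=1$, and $\langle T,z\rangle=T(\mathbf x_1,\ldots,\mathbf x_k)=1=\|T\|$, so $z$ norms $T$ and $|\langle S,z\rangle|\le 1$ for every $S$ in the unit ball. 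Hence $T$ is exposed by $z$ precisely when $T$ is the \emph{unique} norm-one form $S$ with $S(\mathbf x_1,\ldots,\mathbf x_k)=1$; the whole argument reduces to this uniqueness, since any such $S$ automatically attains its norm at $(\mathbf x_1,\ldots,\mathbf x_k)$ with value $1$.

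Next I would extract the rigidity coming from norm attainment. If $F$ is any norm-one symmetric $k$-linear form with $F(\mathbf x_1,\ldots,\mathbf x_k)=1$, then for each index $j$ the linear functional $\mathbf u\mapsto F(\mathbf x_1,\ldots,\mathbf x_{j-1},\mathbf u,\mathbf x_{j+1},\ldots,\mathbf x_k)$ has norm at most one and takes the value $1$ at the unit vector $\mathbf x_j$; by the equality case of Cauchy--Schwarz it must equal $\langle\,\cdot\,,\mathbf x_j\rangle$. Thus every competitor $S$ shares with $T$ all of its \emph{one-variable slices}:
\[
S(\mathbf x_1,\ldots,\mathbf u,\ldots,\mathbf x_k)=\langle \mathbf u,\mathbf x_j\rangle=T(\mathbf x_1,\ldots,\mathbf u,\ldots,\mathbf x_k)\qquad(\forall\,\mathbf u,\ \forall\, j).
\]

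To upgrade agreement on one-variable slices to full agreement I would pass to bilinear slices and invoke Lemma~\ref{lema bilineales}. Since the vectors span $\mathcal H$, two of them, say $\mathbf x_i,\mathbf x_j$, are non-collinear; freezing the remaining $k-2$ arguments at their configuration values produces a symmetric bilinear form $B_F(\mathbf u,\mathbf v)=F(\ldots,\mathbf u,\ldots,\mathbf v,\ldots)$ (with $\mathbf u,\mathbf v$ in slots $i,j$) of norm one that attains its norm at the non-collinear pair $(\mathbf x_i,\mathbf x_j)$. By Lemma~\ref{lema bilineales}, in the orthonormal basis making $\{\mathbf x_i,\mathbf x_j\}$ symmetric about an axis, $B_F$ is exactly $u_1v_1-u_2v_2$; in particular $B_F$ is determined by the pair alone, whence $B_S=B_T$. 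Running this over all admissible non-collinear pairs pins $T$ and $S$ down on the corresponding two-dimensional slices.

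The main obstacle is that when the configuration has a repeated direction of high multiplicity (e.g. $\mathbf x_1=\cdots=\mathbf x_{k-1}$) the available non-collinear bilinear slices see only the coefficients of $F$ adjacent to that direction, and the remaining coefficients stay undetermined by slicing and by the one-variable slices alone. Here the norm-one constraint must be used in full: identifying $\mathcal H$ with $\mathbb C$ (so unit vectors become unimodular numbers and $\mathbf x_j$ corresponds to some $w_j$), the relevant object is the degree-$k$ trigonometric polynomial obtained by restricting $\mathbf w\mapsto F(\mathbf w,\ldots,\mathbf w)$ to the unit circle, and the real-case analysis behind Theorem~\ref{teorema_principal} (Section~\ref{seccion real}) shows that a norm-one form attaining its norm at a two-dimensional configuration must have the extremal shape $\operatorname{Re}\!\left(e^{-i\gamma}\,w_1\cdots w_k\right)$ for a single phase $\gamma$, the maximally equioscillating Chebyshev-type polynomial being the unique sup-norm-one completion of the already-fixed data. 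Once both $T$ and $S$ are known to have this form, matching the value at the configuration gives $e^{-i\gamma_T}w_1\cdots w_k=1=e^{-i\gamma_S}w_1\cdots w_k$, so $e^{-i\gamma_T}=e^{-i\gamma_S}$ and $S=T$. This rigidity of the extremal polynomial, rather than the slicing itself, is the crux, and it is exactly what Lemma~\ref{lema bilineales} together with the Section~\ref{seccion real} computations are arranged to supply.
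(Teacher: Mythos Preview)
Your reduction is exactly the paper's: normalize so that $T(\mathbf x_1,\ldots,\mathbf x_k)=1$, let $z=\mathbf x_1\vee\cdots\vee\mathbf x_k$, and show $z$ exposes $T$ by proving that any norm-one $S$ with $S(\mathbf x_1,\ldots,\mathbf x_k)=1$ must equal $T$. Your use of Cauchy--Schwarz for the one-variable slices and of Lemma~\ref{lema bilineales} for the two-variable slices is also in the right spirit.

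The gap is in your last paragraph. You assert that the Section~\ref{seccion real} analysis shows every norm-one form attaining its norm at a two-dimensional configuration has the shape $\operatorname{Re}(e^{-i\gamma}w_1\cdots w_k)$. Section~\ref{seccion real} does not prove this. Proposition~\ref{prop ida caso real} only shows the span has dimension at most $2$; Lemma~\ref{lema polinomios norma 1} and Proposition~\ref{prop vuelta caso real} \emph{construct} Chebyshev-type forms that attain, but nowhere is it shown that these are the \emph{only} norm-one forms with this property. Your ``unique sup-norm-one completion of the already-fixed data'' is precisely the uniqueness statement you are trying to prove, so the appeal is circular.

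The paper closes this gap with the unnamed Lemma immediately preceding Proposition~\ref{prop-exposed}, proved by induction on $k$. The base case $k=2$ is Lemma~\ref{lema bilineales}. For the inductive step one applies Lemma~\ref{lema bilineales} to the slice $S(\,\cdot\,,\,\cdot\,,\mathbf x_3,\ldots,\mathbf x_k)$ to get a basis $\{\mathbf f_1,\mathbf f_2\}$ in which $S(\alpha\mathbf f_1+\beta\mathbf f_2,\alpha\mathbf f_1-\beta\mathbf f_2,\mathbf x_3,\ldots,\mathbf x_k)=1$ for all $\alpha^2+\beta^2=1$; the inductive hypothesis applied to the $(k-1)$-linear form with last argument frozen at $\mathbf x_k$ transfers this identity to $T$, and a second application of the inductive hypothesis with last argument frozen at $\alpha\mathbf f_1\pm\beta\mathbf f_2$ (choosing $\alpha,\beta$ so that the remaining configuration still spans) forces $S=T$. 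This is the missing argument; once you have it, your Chebyshev description follows \emph{a posteriori} from the existence part of Proposition~\ref{prop vuelta caso real}, but it is not available as an input.
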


This proposition will be a direct consequence of the Lemma below, which may be of interest in its own right. Before going into the proof, we see that the converse of this Proposition does not hold. Indeed, note that $\frac{1}{2} \otimes^2 \mathbf{e}_1+\frac{1}{2} \otimes^2 \mathbf{e}_2$ exposes the  bilinear  form $L$ given by the inner product of $\zR^2$, and $L$ only attains its norm at $(\mathbf{x},\mathbf{y})$ if $\mathbf{x} =\pm \mathbf{y}$. Similarly $\frac{1}{2} \otimes^3 \mathbf{e}_1+\frac{1}{2} \otimes^3 \mathbf{e}_2$ exposes the trilinear form
$$T(\mathbf{x},\mathbf{y},\mathbf{z})=x_1y_1z_1+x_2y_2z_2,$$
and $T$ only attains its norm at $\mathbf{x},\mathbf{y},\mathbf{z}$ if all the vectors are either $\pm \mathbf{e}_1$ or $\pm \mathbf{e}_2$.

\begin{lem} Let $\mathbf{x}_1,\ldots,\mathbf{x}_k$, with $k\geq 2$, be norm one vectors on a real Hilbert space $\mathcal H$ such that
$$\dim (\operatorname{span}\{\mathbf{x}_1,\ldots,\mathbf{x}_k\}) = 2.$$
If $S, T\in  \mathcal L _s(^k\mathcal H)$ have norm one with
$$S(\mathbf{x}_1,\ldots, \mathbf{x}_k)=T(\mathbf{x}_1,\ldots, \mathbf{x}_k)=1$$
and we set $\mathcal H_1:= \operatorname{span}\{\mathbf{x}_1,\ldots,\mathbf{x}_k\})$, then $S|_{\mathcal{H}_1}=T|_{\mathcal{H}_1}$.
\end{lem}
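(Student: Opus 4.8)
The plan is to first extract the Lagrange (first-order) conditions forced by norm attainment, then to reduce everything to the two-dimensional subspace $\mathcal{H}_1$, identify the unique candidate there, and finally to rule out any competitor by an extremal argument for trigonometric polynomials.

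First I would record the first-order conditions. Fix $j$ and consider the linear functional $\mathbf{w}\mapsto S(\mathbf{x}_1,\ldots,\mathbf{x}_{j-1},\mathbf{w},\mathbf{x}_{j+1},\ldots,\mathbf{x}_k)$. Since $\|S\|=1$ and the remaining vectors have norm one, this functional has norm at most one, and it takes the value $1$ at the unit vector $\mathbf{x}_j$. By the Riesz representation together with the equality case of the Cauchy--Schwarz inequality, it must be $\langle\,\cdot\,,\mathbf{x}_j\rangle$; the same holds for $T$. Thus
$$S(\mathbf{x}_1,\ldots,\mathbf{w},\ldots,\mathbf{x}_k)=\langle\mathbf{w},\mathbf{x}_j\rangle=T(\mathbf{x}_1,\ldots,\mathbf{w},\ldots,\mathbf{x}_k)\qquad(\mathbf{w}\in\mathcal{H},\ 1\le j\le k).$$
Because everything now takes place in $\mathcal{H}_1$, and a symmetric $k$-linear form on a real space is determined by its associated $k$-homogeneous polynomial (polarization), it suffices to prove that any norm one $S\in\mathcal{L}_s({}^{k}\mathcal{H}_1)$ with $S(\mathbf{x}_1,\ldots,\mathbf{x}_k)=1$ is uniquely determined; the desired identity $S|_{\mathcal{H}_1}=T|_{\mathcal{H}_1}$ then follows.

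Next I would identify the candidate by identifying $\mathcal{H}_1$ with $\zC$, writing $\mathbf{x}_j=e^{i\theta_j}$ and setting $k\phi:=\sum_j\theta_j$. The form $S_\phi(\mathbf{w}_1,\ldots,\mathbf{w}_k):=\operatorname{Re}\bigl(e^{-ik\phi}\,w_1\cdots w_k\bigr)$ is symmetric, has norm one (its associated polynomial is $\cos(k(\theta-\phi))$ on the unit circle), attains the value one at $(\mathbf{x}_1,\ldots,\mathbf{x}_k)$, and satisfies the first-order conditions above. Hence it is enough to show that the associated polynomial of an arbitrary such $S$ equals $\cos(k(\theta-\phi))$. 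Expanding $S$ in the natural complex basis of $\mathcal{L}_s({}^{k}\mathcal{H}_1)$ and feeding in the first-order conditions, these translate into the vanishing of a fixed polynomial of degree at most $k$ at the points $z_j:=e^{2i\theta_j}$. When the directions $\pm\mathbf{x}_j$ are pairwise distinct this already pins $S$ down to $S_\phi$, which settles the generic case.

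The hard part, and the main obstacle, is the degenerate situation in which directions are repeated: there the first-order conditions leave an affine family of solutions $p_S=\cos(k(\theta-\phi))+r$, and one must use the full strength of $\|S\|=1$ to kill the perturbation $r$. The constraint $\|p_S\|_\infty\le1$ is equivalent to
$$-2\cos^2\!\bigl(\tfrac{k}{2}(\theta-\phi)\bigr)\le r(\theta)\le 2\sin^2\!\bigl(\tfrac{k}{2}(\theta-\phi)\bigr),$$
and both bounds vanish to second order at the $2k$ equally spaced extrema of $\cos(k(\theta-\phi))$. Thus at these $2k$ interlaced points $r$ is forced to satisfy alternating sign conditions, and to have a double zero wherever it meets a bound. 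Since $r$ is a trigonometric polynomial of degree at most $k$ lying in the low-dimensional family produced by the first-order step, this over-determination forces $r\equiv0$ via the bound of $2k$ on the number of zeros of a nonzero trigonometric polynomial of degree $k$; the delicate point is to make the zero count strictly exceed $2k$ by exploiting the double zeros at the touching points. Therefore $p_S=\cos(k(\theta-\phi))=p_T$, and polarization gives $S|_{\mathcal{H}_1}=T|_{\mathcal{H}_1}$.
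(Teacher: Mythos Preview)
Your strategy is genuinely different from the paper's, and part of it is sound: the first-order (Lagrange) condition you derive is correct and useful, and the candidate form $S_\phi(\mathbf w_1,\dots,\mathbf w_k)=\operatorname{Re}\bigl(e^{-ik\phi}w_1\cdots w_k\bigr)$ is indeed the right object. However, the proof as written has a real gap in exactly the place you flag as ``delicate.''

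In the degenerate case you reduce to showing that the perturbation $r=p_S-\cos\bigl(k(\theta-\phi)\bigr)$ vanishes. The sign pattern you extract from $\lvert p_S\rvert\le 1$ only gives, for a trig polynomial $r$ of degree $\le k$, \emph{at least} $2k$ zeros on the circle, and that is precisely the maximum a nonzero such polynomial may have; so zero-counting alone cannot kill $r$. You say the over-determination will come from ``double zeros at the touching points,'' but $r$ need not touch either envelope at all: if $r$ is strictly negative at every maximum of $\cos(k(\theta-\phi))$ and strictly positive at every minimum, you get $2k$ simple zeros and nothing more. The only remaining ingredient is the first-order constraint, which you invoke as ``$r$ lies in the low-dimensional family produced by the first-order step,'' but you never identify that family or explain how it interacts with the zero pattern. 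Likewise, in the generic case the assertion that the first-order conditions ``already pin $S$ down'' is stated but not verified; since $\dim\mathcal L_s({}^k\mathbb R^2)=k+1$ and your $k$ linear conditions on the partial maps are far from obviously independent (they are heavily coupled through symmetry), this needs an actual argument.

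By contrast, the paper avoids the extremal analysis entirely. It argues by induction on $k$: the base case $k=2$ is Lemma~\ref{lema bilineales}, which shows that a norm-one symmetric bilinear form attaining its norm at a non-collinear pair is, in an essentially unique orthonormal basis of $\mathcal H_1$, the matrix $\operatorname{diag}(1,-1)$. For the inductive step one fixes the last variable and applies Lemma~\ref{lema bilineales} to the resulting bilinear form to produce a basis $\{\mathbf f_1,\mathbf f_2\}$; then for any $\alpha^2+\beta^2=1$ both $S$ and $T$ attain their norms at $(\alpha\mathbf f_1+\beta\mathbf f_2,\ \alpha\mathbf f_1-\beta\mathbf f_2,\ \mathbf x_3,\dots,\mathbf x_k)$, and two applications of the inductive hypothesis (freezing different variables) propagate the equality $S|_{\mathcal H_1}=T|_{\mathcal H_1}$. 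This route never needs a global extremal characterization of $p_S$; the $2$-dimensional structure is exploited through the bilinear lemma at every step.
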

\begin{proof}
Let us prove this result by induction on $k$. The case $k=2$ follows from Lemma \ref{lema bilineales} below, using that $F$ is unique and determined by $\mathbf{x}_1, \mathbf{x}_2$ and $S(\mathbf{x}_1,\mathbf{x}_2)$. Assume the result holds for $(k-1)$-linear forms. Suppose that
$$S(\mathbf{x}_1,\ldots, \mathbf{x}_k)=T(\mathbf{x}_1,\ldots, \mathbf{x}_k)=1,$$
with $\dim (\operatorname{span}\{\mathbf{x}_1,\ldots,\mathbf{x}_k\}) = 2.$ We may assume that $\mathcal H_1= \operatorname{span}\{\mathbf{x}_1,\mathbf{x}_2\}$. By Lemma \ref{lema bilineales}, applied to the bilinear form $S(\,\,\cdot\,\,, \,\,\cdot\,\,,\mathbf{x}_3,\ldots,\mathbf{x}_k)$,  there is an orthonormal basis $F=\{\mathbf{f}_1,\mathbf{f}_2\}$ of  $\mathcal H_1$ such that
$$S(\mathbf{f}_1, \mathbf{f}_1, \mathbf{x}_3,\ldots,\mathbf{x}_k)=1$$
$$S(\mathbf{f}_2,\mathbf{f}_2, \mathbf{x}_3,\ldots,\mathbf{x}_k)=-1$$
$$S(\mathbf{f}_1,\mathbf{f}_2,\mathbf{x}_3,\ldots,\mathbf{x}_k)=0.$$
Then, given nonzero real numbers $\alpha, \beta$, with $\alpha^2+\beta^2=1$, it is not hard to see that the equations above imply
$$S(\alpha \mathbf{f}_1+\beta \mathbf{f}_2, \alpha \mathbf{f}_1-\beta \mathbf{f}_2,\mathbf{x}_3,\ldots,\mathbf{x}_k)=1.$$
By inductive hypothesis $S(\,\,\cdot\,\,,\ldots, \,\,\cdot\,\,,\mathbf{x}_k)|_{\mathcal{H}_1}=T(\,\,\cdot\,\,,\ldots, \,\,\cdot\,\,,\mathbf{x}_k)|_{\mathcal{H}_1}$, therefore
$$S(\alpha \mathbf{f}_1+\beta \mathbf{f}_2, \alpha \mathbf{f}_1-\beta \mathbf{f}_2,\mathbf{x}_3,\ldots,\mathbf{x}_k)=T(\alpha \mathbf{f}_1+\beta \mathbf{f}_2, \alpha \mathbf{f}_1-\beta \mathbf{f}_2,\mathbf{x}_3,\ldots,\mathbf{x}_k).$$

If we take nonzero $\alpha, \beta\in \zR$ such that both $\alpha \mathbf{f}_1+\beta \mathbf{f}_2$ and $\alpha \mathbf{f}_1-\beta \mathbf{f}_2$ are not $\pm \mathbf{x}_3$, by inductive  hypothesis, we have that
$$S(\,\,\cdot\,\,,\ldots, \,\,\cdot\,\,,\alpha \mathbf{f}_1+\beta \mathbf{f}_2 )|_{\mathcal{H}_1}= T(\,\,\cdot\,\,,\ldots, \,\,\cdot\,\,,\alpha \mathbf{f}_1+\beta \mathbf{f}_2 )|_{\mathcal{H}_1}$$
$$S(\,\,\cdot\,\,,\ldots, \,\,\cdot\,\,,\alpha \mathbf{f}_1-\beta \mathbf{f}_2 )|_{\mathcal{H}_1}=T(\,\,\cdot\,\,,\ldots, \,\,\cdot\,\,,\alpha \mathbf{f}_1-\beta \mathbf{f}_2 )|_{\mathcal{H}_1}.$$
Since $\{\alpha\mathbf{f}_1+\beta \mathbf{f}_2, \alpha\mathbf{f}_1-\beta \mathbf{f}_2 \}$ is a basis of $\mathcal H_1$ we conclude the desired result.
\end{proof}

\emph{Proof of Proposition \ref{prop-exposed}}. \
By the previous Lemma, if we take $\varepsilon:= \operatorname{sign}( T(\mathbf{x}_1,\ldots,\mathbf{x}_k))$, then  $\varepsilon\,\mathbf{x}_1\vee \cdots \vee \mathbf{x}_k \in \otimes^{s,k} \mathcal H$ exposes $T$.
\qed

\section{Bilinear Forms}\label{seccion bilineal}

Given two vectors $\mathbf{x},\mathbf{y}$ in a Hilbert space $\mathcal H$ we write $x\parallel y$ if there is $\lambda \in \zK$ such that $\mathbf{x} = \lambda \mathbf{y}$ and  $x\nparallel y$ when this is not the case. If $\dim(\mathcal{H})=n$, given a symmetric bilinear form $T: \mathcal{H} \times \mathcal{H} \rightarrow \zK$, $\mathbf{x}\in \mathcal H$ and $F=\{\mathbf{f}_1,\ldots,\mathbf{f}_n\}$ a basis of $\mathcal{H}$, $[\mathbf{x}]_F$ stands for the (row) coordinate vector of $\mathbf{x}$ relative to $F$ and  $[T]_F$ the $n\times n$ symmetric matrix such that
$$T(\mathbf{w}_1,\mathbf{w}_2) = [\mathbf{w}_1]_F  [T]_F [\mathbf{w}_2]_F^t.$$
That is, $[T]_F = (T(\mathbf{f}_i,\mathbf{f}_j))_{i,j}$.

\begin{lem}\label{lema bilineales}

Let  $T$ be a norm one symmetric bilinear form on the 2-dimensional Hilbert space $\zK^2$ that attains its norm at $(\mathbf{x},\mathbf{y})$, with $\mathbf{x}\nparallel \mathbf{y}$. Then there is an orthonormal basis $F=\{\mathbf{f}_1,\mathbf{f}_2\}$ of $\zK^2$, such that
$$[T]_F=
\left(\begin{array}{lr}
1 	& 	 0	 \\
0	&	-1		\\
\end{array}\right).$$
As a consequence,  the matrix $[T]_G$ is unitary for every orthonormal basis $G$ of $\zK^2$. Moreover, in the real case, up to signs, the basis $\{\mathbf{f}_1,\mathbf{f}_2\}$ is unique.
\end{lem}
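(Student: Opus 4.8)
The plan is to reduce everything to a statement about the $2\times 2$ symmetric matrix of $T$ and exploit the fact that $T$ attains its norm $1$ at two non-collinear vectors $\mathbf{x},\mathbf{y}$. First I would fix an arbitrary orthonormal basis and think of $T$ as a symmetric matrix $A$. Since $\|T\|=1$ equals the norm of the associated polynomial by \eqref{eq-isometria}, the quadratic form $\mathbf{w}\mapsto T(\mathbf{w},\mathbf{w})$ has supremum of absolute value $1$ on the unit sphere, which means all eigenvalues of $A$ lie in $[-1,1]$ (in the real case) or, after the appropriate Hermitian analysis, have modulus at most $1$. The key structural input is that the norm is attained at a genuinely non-diagonal pair $(\mathbf{x},\mathbf{y})$ with $\mathbf{x}\nparallel\mathbf{y}$; I expect this to force the two extreme eigenvalues to be exactly $+1$ and $-1$. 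Indeed, if both eigenvalues had the same sign (or one were strictly interior), the bilinear form would essentially behave like a definite form, and a short Cauchy--Schwarz-type argument via polarization should show it can only attain its norm at collinear vectors.

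The heart of the argument, then, is showing that attainment at non-collinear vectors forces eigenvalues $\{1,-1\}$. I would argue as follows: diagonalize $A$ in an orthonormal eigenbasis with eigenvalues $\lambda_1,\lambda_2$, $|\lambda_i|\le 1$. Writing $\mathbf{x},\mathbf{y}$ in this basis and using $|T(\mathbf{x},\mathbf{y})|=1=\|T\|$, I would expand $T(\mathbf{x},\mathbf{y})=\lambda_1 x_1 y_1 + \lambda_2 x_2 y_2$ and bound it by $|\lambda_1||x_1||y_1|+|\lambda_2||x_2||y_2|\le |x_1||y_1|+|x_2||y_2|\le \|\mathbf{x}\|\|\mathbf{y}\|=1$, using Cauchy--Schwarz at the last step. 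Equality throughout forces $|\lambda_1|=|\lambda_2|=1$ (wherever the corresponding coordinates are nonzero) and forces the Cauchy--Schwarz equality condition, namely proportionality of the coordinate vectors $(|x_1|,|x_2|)$ and $(|y_1|,|y_2|)$. Since $\mathbf{x}\nparallel\mathbf{y}$, the two eigenvalues cannot share a sign (otherwise the phases would align and the vectors would be collinear); hence $\{\lambda_1,\lambda_2\}=\{1,-1\}$, and the diagonalizing eigenbasis is the desired $F$ giving $[T]_F=\operatorname{diag}(1,-1)$.

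For the consequence that $[T]_G$ is unitary in every orthonormal basis $G$, I would simply note that $[T]_G = U^t [T]_F U$ (real) or the corresponding congruence (complex), where $U$ is the unitary change-of-basis matrix between two orthonormal bases; since $[T]_F=\operatorname{diag}(1,-1)$ is itself unitary and the product of unitaries is unitary, so is $[T]_G$. A small care is needed in the complex case regarding whether the change of basis enters as a congruence $U^t(\cdot)U$ or a similarity, but because we restrict to orthonormal bases the transition matrix is unitary and the stated conclusion holds either way. Finally, for uniqueness of $\{\mathbf{f}_1,\mathbf{f}_2\}$ up to signs in the real case: the vectors realizing eigenvalues $+1$ and $-1$ are precisely the unit eigenvectors of the symmetric matrix $A$ for the distinct eigenvalues $1$ and $-1$, and eigenspaces for distinct eigenvalues of a symmetric matrix are one-dimensional and orthogonal, so each $\mathbf{f}_i$ is determined up to sign.

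The main obstacle I anticipate is pinning down the equality analysis in the complex case. There one must be careful: $T(\mathbf{x},\mathbf{y})=1$ only after possibly multiplying $\mathbf{x},\mathbf{y}$ by unimodular scalars, the eigenvalues are real (as $A$ is symmetric, not Hermitian, over $\zC$) but the coordinates are complex, and the equality conditions in the triangle inequality involve matching complex phases rather than just signs. Disentangling which phase alignments are compatible with $\mathbf{x}\nparallel\mathbf{y}$, and confirming that they still force $\{\lambda_1,\lambda_2\}=\{1,-1\}$ rather than some other unimodular pair, is the delicate point; everything else is routine linear algebra once the eigenvalues are identified.
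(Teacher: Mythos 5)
Your proposal is sound over $\zR$, but it has a genuine gap over $\zC$, and the lemma is stated for both fields. The core step --- ``diagonalize $A$ in an orthonormal eigenbasis with eigenvalues $\lambda_1,\lambda_2$'' --- does not exist for a complex \emph{symmetric} (as opposed to Hermitian) matrix. Such matrices need not have real eigenvalues and need not be diagonalizable at all: $A=\begin{pmatrix} 1 & i\\ i & -1\end{pmatrix}$ is symmetric and satisfies $A^2=0$. Your parenthetical claim that ``the eigenvalues are real (as $A$ is symmetric, not Hermitian, over $\zC$)'' has it exactly backwards: it is Hermitian matrices that have real eigenvalues. Worse, eigenvalues are not even an invariant of the bilinear form: under a change of orthonormal basis with unitary transition matrix $C$, the matrix of $T$ transforms by the \emph{congruence} $C\,[T]_F\,C^t$, not by similarity. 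For instance, with $C=\operatorname{diag}(1,i)$ the identity matrix becomes $\operatorname{diag}(1,-1)$, so these two matrices represent the same bilinear form in two orthonormal bases --- which is precisely why uniqueness fails over $\zC$, as the paper's remark after the lemma observes. So the delicate point in the complex case is not ``phase alignment,'' as you anticipated, but the fact that your structural tool is unavailable. The standard repair is the Autonne--Takagi factorization $A=U\Sigma U^t$ ($U$ unitary, $\Sigma=\operatorname{diag}(\sigma_1,\sigma_2)$ with $\sigma_i\ge 0$ the singular values), which is a unitary congruence and hence does correspond to a change of orthonormal basis; then $\Vert T\Vert=\max_i\sigma_i$, your Cauchy--Schwarz equality analysis goes through essentially verbatim to force $\Sigma=\operatorname{Id}$, and rescaling the second basis vector by $i$ turns $\operatorname{Id}$ into $\operatorname{diag}(1,-1)$.

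Over $\zR$ your argument is correct and genuinely different from the paper's: you invoke the spectral theorem and squeeze $1=\vert T(\mathbf{x},\mathbf{y})\vert\le \vert\lambda_1\vert\vert x_1\vert\vert y_1\vert+\vert\lambda_2\vert\vert x_2\vert\vert y_2\vert\le \Vert\mathbf{x}\Vert\,\Vert\mathbf{y}\Vert$, whereas the paper uses no diagonalization theorem at all: it builds the basis directly from the norming pair, $\mathbf{g}_{1,2}=(\mathbf{x}\pm\lambda\mathbf{y})/\Vert\mathbf{x}\pm\lambda\mathbf{y}\Vert$, shows $\vert T(\mathbf{g}_i,\mathbf{g}_i)\vert=1$ by polarization, rescales, and kills the off-diagonal entry by an elementary inequality --- an argument that is uniform in $\zK$ and, in addition, exhibits the basis inside $\operatorname{span}\{\mathbf{x},\mathbf{y}\}$, a feature the paper exploits later (e.g.\ in the proofs of Propositions 2.3 and 3.1). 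One detail you should make explicit in your real-case equality analysis: Cauchy--Schwarz equality gives $\vert x_i\vert=\vert y_i\vert$, but to conclude $\vert\lambda_i\vert=1$ you must rule out $x_iy_i=0$; this follows because a vanishing coordinate pair would force $\mathbf{x}$ and $\mathbf{y}$ to be collinear. Your uniqueness argument and the unitary-congruence argument for the final ``consequence'' are fine.
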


\begin{proof}
Let us prove the existence of $F$ in the complex case (the real case is analogous). Take $\lambda$ a modulus one number such  $\operatorname{Im}(\langle \lambda \mathbf{y}, \mathbf{x} \rangle)=0$. Define $\mathbf{g}_1 := \frac{\mathbf{x}+\lambda \mathbf{y}}{\Vert \mathbf{x}+ \lambda \mathbf{y} \Vert}$ and $\mathbf{g}_2:= \frac{\mathbf{x}-\lambda \mathbf{y}}{\Vert \mathbf{x} - \lambda \mathbf{y} \Vert}$. These vectors are orthonormal:
\begin{eqnarray}
\langle \mathbf{x} +\lambda \mathbf{y},\mathbf{ x} -\lambda \mathbf{y} \rangle & = & \Vert \mathbf{x} \Vert^2 - \Vert \lambda \mathbf{y} \Vert^2 +2 \operatorname{Im}\langle \lambda \mathbf{y}, \mathbf{x} \rangle \nonumber \\
&=& 0 \nonumber.
\end{eqnarray}

Since $T$ attains its norm at $(\mathbf{x},\mathbf{y})$, we have
\begin{eqnarray}
1&=& |T(\mathbf{x},\lambda \mathbf{y})| \nonumber \\
&=& \frac{1}{4}|T(\mathbf{x}+\lambda\mathbf{y},\mathbf{x}+\lambda\mathbf{y}) - T(\mathbf{x}-\lambda\mathbf{y},\mathbf{x}-\lambda\mathbf{y})|\nonumber \\
&\leq &\frac{1}{4} (\Vert \mathbf{x}+\lambda\mathbf{y} \Vert^2 + \Vert \mathbf{x}-\lambda\mathbf{y} \Vert^2)\label{equal} \\
&=& \frac{1}{4} (2\Vert \mathbf{x}\Vert^2 +2 \Vert \lambda\mathbf{y} \Vert^2) =1.  \nonumber \
\end{eqnarray}
Thus \eqref{equal} must be an equality, which implies that
$$\vert T(\mathbf{g}_1,\mathbf{g}_1)\vert =\vert T(\mathbf{g}_2,\mathbf{g}_2)\vert =1.$$

Take $\mathbf{f}_1:=\lambda_1\mathbf{g}_1$ and $\mathbf{f}_2:=\lambda_2\mathbf{g}_2$, where $\lambda_1, \lambda_2$ are modulus one complex numbers such that $\lambda_1^2=\frac{1}{T(\mathbf{g}_1,\mathbf{g}_1)}$ and  $\lambda_2^2=-\frac{1}{T(\mathbf{g}_2,\mathbf{g}_2)}$. Clearly $F=\{\mathbf{f}_1,\mathbf{f}_2\}$ is an orthonormal basis,    and we have $T(\mathbf{f}_1,\mathbf{f}_1)= 1$ and $T(\mathbf{f}_2,\mathbf{f}_2)=-1$. We only need to prove that $T(\mathbf{f}_1,\mathbf{f}_2)=0$. For any $\alpha\in \zR$, taking $ \beta =\overline{T(\mathbf{f}_1,\mathbf{f}_2)}$, we have
\begin{eqnarray*}
\alpha^2 + 2\alpha |T(\mathbf{f}_1,\mathbf{f}_2)|^2+|T(\mathbf{f}_1,\mathbf{f}_2))|^4&=&\alpha^2 + 2\alpha\beta T(\mathbf{f}_1,\mathbf{f}_2)+(\beta T(\mathbf{f}_1,\mathbf{f}_2))^2\\
&=&(\alpha + \beta T(\mathbf{f}_1,\mathbf{f}_2))^2\\
&=&(T(\mathbf{f}_1,\alpha \mathbf{f}_1+\beta \mathbf{f}_2))^2\\
&\leq& \Vert \alpha \mathbf{f}_1+\beta \mathbf{f}_2 \Vert^2\\
&=& \alpha^2+|\beta|^2.
\end{eqnarray*}
Therefore $2\alpha |T(\mathbf{f}_1,\mathbf{f}_2)|^2+|T(\mathbf{f}_1,\mathbf{f}_2))|^4\leq |\beta|^2$ for all $\alpha \in \zR$, which implies that $T(\mathbf{f}_1,\mathbf{f}_2)=0$.

The real case can also be proved using the eigendecomposition for real symmetric matrices and that $T$ cannot be the inner product.
Now let us prove that in the real case $F$ is unique. Suppose that $H=\{\mathbf{h}_1,\mathbf{h}_2\}$ is an orthonormal basis such that
$$[T]_H=
\left(\begin{array}{lr}
1 	& 	 0	 \\
0	&	-1		\\
\end{array}\right).$$
If $[\mathbf{h}_1]_F=(a,b)$, then
\begin{eqnarray*}
1&=&T(\mathbf{h}_1,\mathbf{h}_1)\\
&=& [\mathbf{h}_1]_F
[T]_F
[\mathbf{h}_1]_F^t \\
&=& a^2-b^2\
\end{eqnarray*}
Therefore, $a=\pm 1$ and $b=0$. This means that  $h_1=\pm f_1$ and, being both bases orthonormal, we also have $h_2=\pm f_2$.
\end{proof}

\begin{rem} Notice that uniqueness fails in the complex case. If $F=\{\mathbf{f}_1,\mathbf{f}_2\}$ is an orthonormal basis as in the Lemma \ref{lema bilineales}, then $H=\left\{\frac{1}{\sqrt{2}}\mathbf{f}_1+\frac{i}{\sqrt{2}}\mathbf{f}_2, -\frac{i}{\sqrt{2}}\mathbf{f}_1 +\frac{1}{\sqrt{2}}\mathbf{f}_2\right\}$ is another orthonormal basis such that
$$[T]_H=\left(\begin{array}{lr}
1 	& 	 0	 \\
0	&	-1		\\
\end{array}\right).$$
\end{rem}

\medskip
Now we show that for any two vectors, there exists some bilinear form attaining its norm at them.

\begin{prop}\label{bilineales} Given two norm one vectors $\mathbf{x}, \mathbf{y}$ on a Hilbert space $\mathcal H$, there is a symmetric bilinear form $T:\mathcal H \times \HH\rightarrow \zK$ that attains its norm at $(\mathbf{x},\mathbf{y})$.
\end{prop}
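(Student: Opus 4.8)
The plan is to produce an explicit symmetric bilinear form attaining its norm at $(\mathbf{x},\mathbf{y})$. Since everything here depends only on the $2$-dimensional subspace $\operatorname{span}\{\mathbf{x},\mathbf{y}\}$ (we can define $T$ to be zero on an orthogonal complement, which does not increase the norm), I would first reduce to the case $\mathcal{H}=\zK^2$. If $\mathbf{x}\parallel \mathbf{y}$, the problem is trivial: any rank-one form $T(\mathbf{w}_1,\mathbf{w}_2)=\langle \mathbf{w}_1,\mathbf{x}\rangle\langle \mathbf{w}_2,\mathbf{x}\rangle$ attains its norm at $(\mathbf{x},\mathbf{x})$, so I would assume $\mathbf{x}\nparallel \mathbf{y}$. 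The guiding intuition comes from Lemma~\ref{lema bilineales}: a bilinear form attaining its norm at non-collinear vectors should look like $\operatorname{diag}(1,-1)$ in an appropriate orthonormal basis, so I would run that construction in reverse.

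Concretely, I would choose the unimodular scalar $\lambda$ with $\operatorname{Im}\langle \lambda \mathbf{y},\mathbf{x}\rangle=0$ exactly as in the proof of Lemma~\ref{lema bilineales}, and set $\mathbf{g}_1:=\frac{\mathbf{x}+\lambda\mathbf{y}}{\|\mathbf{x}+\lambda\mathbf{y}\|}$ and $\mathbf{g}_2:=\frac{\mathbf{x}-\lambda\mathbf{y}}{\|\mathbf{x}-\lambda\mathbf{y}\|}$, which the computation there shows to be orthonormal (here $\mathbf{x}\nparallel\mathbf{y}$ guarantees both denominators are nonzero). I would then define $T$ directly by its matrix in the orthonormal basis $G=\{\mathbf{g}_1,\mathbf{g}_2\}$, namely
\begin{equation}\label{eq-def-prop-bil}
[T]_G=\left(\begin{array}{lr} 1 & 0 \\ 0 & -1 \end{array}\right),
\end{equation}
i.e. $T(\mathbf{w}_1,\mathbf{w}_2)=\langle \mathbf{w}_1,\mathbf{g}_1\rangle\langle \mathbf{w}_2,\mathbf{g}_1\rangle - \langle \mathbf{w}_1,\mathbf{g}_2\rangle\langle \mathbf{w}_2,\mathbf{g}_2\rangle$ in the complex case (with appropriate conjugation conventions), and the analogous real expression. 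Because $G$ is orthonormal and the matrix is unitary, one has $\|T\|=1$.

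Finally I would verify that $T$ attains its norm at $(\mathbf{x},\mathbf{y})$. Writing $\mathbf{x}$ and $\lambda\mathbf{y}$ in the basis $G$, the polarization-type identity
\begin{equation*}
T(\mathbf{x},\lambda\mathbf{y})=\tfrac14\bigl(T(\mathbf{x}+\lambda\mathbf{y},\mathbf{x}+\lambda\mathbf{y})-T(\mathbf{x}-\lambda\mathbf{y},\mathbf{x}-\lambda\mathbf{y})\bigr)
\end{equation*}
reduces everything to evaluating $T$ on $\mathbf{g}_1$ and $\mathbf{g}_2$: since $\mathbf{x}+\lambda\mathbf{y}$ is a positive multiple of $\mathbf{g}_1$ and $\mathbf{x}-\lambda\mathbf{y}$ of $\mathbf{g}_2$, and $T(\mathbf{g}_1,\mathbf{g}_1)=1$, $T(\mathbf{g}_2,\mathbf{g}_2)=-1$, a short computation gives $|T(\mathbf{x},\lambda\mathbf{y})|=\tfrac14(\|\mathbf{x}+\lambda\mathbf{y}\|^2+\|\mathbf{x}-\lambda\mathbf{y}\|^2)=\tfrac14(2\|\mathbf{x}\|^2+2\|\mathbf{y}\|^2)=1$, whence $|T(\mathbf{x},\mathbf{y})|=1=\|T\|$. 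The one point requiring care—the main (modest) obstacle—is the non-collinearity hypothesis and the scalar bookkeeping: I must confirm $\mathbf{x}\pm\lambda\mathbf{y}\neq 0$ so that $\mathbf{g}_1,\mathbf{g}_2$ are well defined and form a basis, and I must track the modulus-one factor $\lambda$ carefully through the complex inner-product conventions so that the displayed equalities hold verbatim. Apart from this, the argument is a direct reversal of Lemma~\ref{lema bilineales} and presents no real difficulty.
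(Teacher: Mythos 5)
Your proposal is correct and takes essentially the same approach as the paper: the collinear case is handled by the rank-one form $\langle\,\cdot\,,\mathbf{x}\rangle\langle\,\cdot\,,\mathbf{x}\rangle$, and in the non-collinear case the paper likewise takes the orthonormal pair $\mathbf{g}_1,\mathbf{g}_2$ from the proof of Lemma~\ref{lema bilineales} and defines $T$ (composed with the orthogonal projection onto $\operatorname{span}\{\mathbf{x},\mathbf{y}\}$) by the matrix $\left(\begin{smallmatrix}1 & 0\\ 0 & -1\end{smallmatrix}\right)$ in that basis. The only difference is that you spell out the norm-one and norm-attainment verifications (unitarity of the matrix, the polarization identity and the parallelogram law) that the paper leaves implicit.
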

\begin{proof}
If $\mathbf{x}\parallel \mathbf{y}$ it is enough to consider $T$ defined as
$$T(\mathbf{w}_1,\mathbf{w}_2):=\langle \mathbf{w}_1,\mathbf{x}\rangle\,\langle \mathbf{w}_2,\mathbf{x}\rangle.$$
For $\mathbf{x}\nparallel \mathbf{y}$, take $\mathbf{g}_1, \mathbf{g}_2$ as in the proof of Lemma \ref{lema bilineales} and $T$ defined as
\begin{eqnarray*}
T(\mathbf{w}_1,\mathbf{w}_2)&:=& [\operatorname{Proy}(\mathbf{w}_1)]_G\left(\begin{array}{lr}
1 	& 	 0	 \\
0	&	-1		\\
\end{array}\right)[\operatorname{Proy}(\mathbf{w}_2)]_G^t\\
&=&\langle \mathbf{w}_1, \mathbf{g}_1\rangle \langle\mathbf{w}_2, \mathbf{g}_1\rangle -\langle\mathbf{w}_1, \mathbf{g}_2\rangle\langle\mathbf{w}_2, \mathbf{g}_2\rangle, \
\end{eqnarray*}
where $\operatorname{Proy}:\mathcal H\rightarrow \operatorname{span}\{\mathbf{x},\mathbf{y}\}$ is the orthogonal projection.\end{proof}



\section{Multilinear forms: The Complex Case}\label{seccion compleja}

In this section we show the complex case of Theorem~\ref{teorema_principal}. Note that in this case, one of the implications of the theorem is trivial.

\begin{prop}\label{prop caso complejo} Let $\mathcal H$ be a complex Hilbert space  and $k\ge 3$. If a nonzero $T\in  \mathcal L _s(^k\mathcal H)$ attains its norm at $(\mathbf{x}_1,\ldots,\mathbf{x}_k)$, then
$$\dim (\operatorname{span}\{\mathbf{x}_1,\ldots,\mathbf{x}_k\})=1.$$
\end{prop}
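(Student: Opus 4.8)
The plan is to prove the contrapositive refinement: assuming $T$ attains its norm at $(\mathbf{x}_1,\ldots,\mathbf{x}_k)$ with $\|T\|=1$ and $T(\mathbf{x}_1,\ldots,\mathbf{x}_k)=1$ (after multiplying by a unimodular constant), I would show that any two of the vectors must be parallel, which forces the span to be one-dimensional. The crucial tool is the isometry \eqref{eq-isometria} between the symmetric $k$-linear form and its associated polynomial, which says $\|T\|=\sup_{\|\mathbf{w}\|\le 1}|T(\mathbf{w},\ldots,\mathbf{w})|$. The strategy is to exploit the rigidity that complex scalars provide: unlike the real case, we can rotate vectors by arbitrary phases $e^{i\theta}$ while preserving norms.

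First I would reduce to showing that $\mathbf{x}_1 \parallel \mathbf{x}_2$; by symmetry of $T$ and relabeling, the same argument applies to any pair, so all vectors are mutually parallel and the span is one-dimensional. To this end, I would fix the remaining slots by setting $S(\mathbf{u},\mathbf{v}):=T(\mathbf{u},\mathbf{v},\mathbf{x}_3,\ldots,\mathbf{x}_k)$, a symmetric bilinear form on $\mathcal{H}$ with $S(\mathbf{x}_1,\mathbf{x}_2)=1$ and $\|S\|\le 1$. The key observation is that $1=|S(\mathbf{x}_1,\mathbf{x}_2)|\le \|S\|\le \|T\|=1$, so $\|S\|=1$ and $S$ attains its norm at $(\mathbf{x}_1,\mathbf{x}_2)$. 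If $\mathbf{x}_1\nparallel\mathbf{x}_2$, then by Lemma~\ref{lema bilineales} there is an orthonormal basis $\{\mathbf{g}_1,\mathbf{g}_2\}$ of $\operatorname{span}\{\mathbf{x}_1,\mathbf{x}_2\}$ with $S(\mathbf{g}_1,\mathbf{g}_1)=1$ and $S(\mathbf{g}_2,\mathbf{g}_2)=-1$.

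The heart of the proof is deriving a contradiction from the existence of a vector $\mathbf{g}_2$ with $S(\mathbf{g}_2,\mathbf{g}_2)=-1$, i.e.\ $T(\mathbf{g}_2,\mathbf{g}_2,\mathbf{x}_3,\ldots,\mathbf{x}_k)=-1$. Since $\|T\|=1$ and $\mathbf{g}_2,\mathbf{x}_3,\ldots,\mathbf{x}_k$ are unit vectors, $T$ also attains its norm (in modulus) at $(\mathbf{g}_2,\mathbf{g}_2,\mathbf{x}_3,\ldots,\mathbf{x}_k)$. The plan is to now use the polynomial identity: consider $\mathbf{w}(\theta)=\cos\theta\,\mathbf{g}_1+e^{i\psi}\sin\theta\,\mathbf{g}_2$ type combinations, or more cleverly compare $T(\mathbf{g}_1,\ldots)=1$ against $T(\mathbf{g}_2,\ldots)=-1$ and use that the polynomial $p(\mathbf{w})=T(\mathbf{w},\ldots,\mathbf{w})$ has modulus bounded by $1$ on the unit ball while taking values $+1$ and $-1$ at two points. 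Applying the isometry \eqref{eq-isometria} together with an argument about how the phase of $T(\mathbf{w},\mathbf{x}_3,\ldots,\mathbf{x}_k,\cdot)$ must rotate, one finds a unit vector where $|T(\mathbf{w},\ldots,\mathbf{w})|>1$, contradicting $\|T\|=1$.

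I expect the main obstacle to be the final contradiction step: making precise why, in the complex setting, the coexistence of the values $+1$ and $-1$ forces the norm to exceed $1$. The natural mechanism is that for the associated homogeneous polynomial, evaluating along a complex one-parameter family such as $\mathbf{w}_t = \frac{\mathbf{g}_1 + t\,\mathbf{g}_2}{\|\mathbf{g}_1+t\mathbf{g}_2\|}$ (using the remaining fixed entries $\mathbf{x}_3,\ldots,\mathbf{x}_k$ as a multilinear ``background'') produces a rational function of $t\in\zC$ that attains modulus strictly larger than $1$ for a suitable choice of complex $t$, precisely because the two leading terms $+1$ and $-1$ interfere constructively when $t$ is given the right phase—an effect that is impossible to arrange with real scalars, which is exactly why the real case behaves differently. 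Carefully choosing the perturbation and verifying the strict inequality is the delicate computational core.
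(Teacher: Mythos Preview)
Your setup through the application of Lemma~\ref{lema bilineales} is fine, but the ``heart of the proof'' as you describe it is not yet a proof, and the mechanism you sketch cannot work as stated. The difficulty is this: the information you have extracted, namely
\[
T(\mathbf{g}_1,\mathbf{g}_1,\mathbf{x}_3,\ldots,\mathbf{x}_k)=1,\qquad
T(\mathbf{g}_2,\mathbf{g}_2,\mathbf{x}_3,\ldots,\mathbf{x}_k)=-1,
\]
is \emph{exactly} the content of Lemma~\ref{lema bilineales} for the bilinear form $S=T(\,\cdot\,,\,\cdot\,,\mathbf{x}_3,\ldots,\mathbf{x}_k)$, and that lemma holds in both the real and complex cases without contradiction. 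A norm-one symmetric bilinear form taking the values $+1$ and $-1$ on an orthonormal pair is perfectly consistent with $\|T\|=1$; your interference idea along $\mathbf{w}_t=(\mathbf{g}_1+t\mathbf{g}_2)/\|\mathbf{g}_1+t\mathbf{g}_2\|$ will not produce $|S(\mathbf{w}_t,\mathbf{w}_t)|>1$, since $S(\mathbf{w}_t,\mathbf{w}_t)=(1-t^2)/(1+|t|^2)$ has modulus at most $1$ for every $t\in\zC$. In short, once you freeze the last $k-2$ slots you have thrown away the very feature that distinguishes $k\ge 3$ from $k=2$, and no amount of phase-rotation in the first two variables will recover it.

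The paper's argument avoids this trap by \emph{not} freezing the third slot. After reducing to $k=3$ and $\mathcal H=\zC^2$, it applies Lemma~\ref{lema bilineales} to $T(\mathbf{x}_2,\,\cdot\,,\,\cdot\,)$ to get an orthonormal basis $F=\{\mathbf{f}_1,\mathbf{f}_2\}$, and then observes that each of the bilinear forms $T(\,\cdot\,,\,\cdot\,,\mathbf{f}_1)$ and $T(\,\cdot\,,\,\cdot\,,\mathbf{f}_2)$ attains its norm at a non-collinear pair, so by Lemma~\ref{lema bilineales} their matrices $A_1,A_2$ in the basis $F$ are both unitary. The crucial step---which genuinely uses the third variable---is that $T(\,\cdot\,,\,\cdot\,,\alpha\mathbf{f}_1+\beta\mathbf{f}_2)$ also attains its norm at a non-collinear pair for generic $(\alpha,\beta)$, forcing $\alpha A_1+\beta A_2$ to be unitary for a whole family of $(\alpha,\beta)$. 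Expanding $(\alpha A_1+\beta A_2)(\alpha A_1+\beta A_2)^*=\operatorname{Id}$ yields $A_1A_2^*=0$, impossible for unitary matrices. To repair your argument you must exploit the third slot in a comparable way; the contradiction does not live inside a single bilinear section.
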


\begin{proof}
We will prove the case $k=3$, the general case follows by induction on $k$, fixing one variable of $T$. Suppose $T$ has norm one and that attains its norm at $(\mathbf{x}_1,\mathbf{x}_2,\mathbf{x}_3)$, with $\mathbf{x}_2\nparallel \mathbf{x}_3$

We may assume $\mathbf{x}_1=\mathbf x_2$. Indeed, if $\mathbf{x}_1\parallel \mathbf{x}_2$ just take a multiple of one of them. If $\mathbf{x}_1\not\parallel \mathbf{x}_2$, applying Lemma \ref{lema bilineales} to the bilinear form $T(\,\,\cdot\,\,, \,\,\cdot\,\,, \mathbf{x}_3)$, we may replace $(\mathbf{x}_1,\mathbf{x}_2)$ by $\left(\mathbf{g}_i,\mathbf{g}_i\right)$, with $i$ such that $\mathbf{g}_i\nparallel \mathbf{x}_3$. Also, working with the restriction of $T$ to $\operatorname{span}\{\mathbf{x}_2,\mathbf{x}_2,\mathbf{x}_3\}$, there is no harm in assuming  $\mathcal H =\operatorname{span}\{\mathbf{x}_2,\mathbf{x}_2,\mathbf{x}_3\}$.

By Lemma \ref{lema bilineales} applied to $T(\mathbf{x}_2,\,\,\cdot\,\,, \,\,\cdot\,\,)$, there is an orthonormal basis $F=\{\mathbf{f}_1,\mathbf{f}_2\}$ of $\mathcal{H}$ such that
$$
\begin{array}{c c c}
T(\mathbf{x}_2,\mathbf{f}_1,\mathbf{f}_1)& =&\,\,\,\,1\\
T(\mathbf{x}_2,\mathbf{f}_2,\mathbf{f}_2)&=&-1\\
T(\mathbf{x}_2,\mathbf{f}_1,\mathbf{f}_2)&=& \,\,\,\,\,\,0.\
\end{array}
$$

Consider now the bilinear forms $T(\,\,\cdot\,\,,\,\,\cdot\,\,,\mathbf{f}_1)$ and $T(\,\,\cdot\,\,,\,\,\cdot\,\,,\mathbf{f}_2)$. We have that $T(\,\,\cdot\,\,,\,\,\cdot\,\,,\mathbf{f}_1)$ attains its norm at $(\mathbf{x}_2,\mathbf{f}_1)$. Since $\mathbf{f}_1\nparallel \mathbf{x}_2$ (see proof of Lemma \ref{lema bilineales}), the matrix $A_1:= [T(\,\,\cdot\,\,, \,\,\cdot\,\,, \mathbf{f}_1)]_F$ is unitary. Similarly, $A_2:= [T(\,\,\cdot\,\,, \,\,\cdot\,\,, \mathbf{f}_2)]_F$ is also unitary.

Take $\alpha,\beta \in \zC$ such that $|\alpha|^2+|\beta|^2=1$. Consider the norm one vectors $\mathbf{v}:=\alpha \mathbf{f}_1+\beta \mathbf{f}_2$ and $\mathbf{w}:=\overline{\alpha}\mathbf{f}_1-\overline{\beta}\mathbf{f}_2$. The bilinear form $T(\,\,\cdot\,\,,\,\,\cdot\,\,,\mathbf{v})$ attains its norm at $(\mathbf{x}_2,\mathbf{w})$:
\begin{eqnarray*}
T(\mathbf{x}_2,\overline{\alpha}\mathbf{f}_1-\overline{\beta}\mathbf{f}_2, \alpha \mathbf{f}_1+\beta \mathbf{f}_2)&=& |\alpha|^2T(\mathbf{x}_2,\mathbf{f}_1,\mathbf{f}_1)-|\beta|^2T(\mathbf{x}_2,\mathbf{f}_2,\mathbf{f}_2)\\
 & &+ (\overline{\alpha}\beta-\alpha \overline{\beta}) T(\mathbf{x}_2,\mathbf{f}_1,\mathbf{f}_2)\\
&=& 1.\
\end{eqnarray*}
Then, if $\mathbf{x}_2 \nparallel \mathbf{w}$, we have that the matrix
\begin{equation}\nonumber
[T(\,\,\cdot\,\,, \,\,\cdot\,\,,  \mathbf{v})]_F=\alpha A_1+\beta A_2
\end{equation}
is unitary. Therefore
\begin{eqnarray*}
\operatorname{Id}&=&(\alpha A_1+\beta A_2)(\alpha A_1+\beta A_2)^*\\
&=& |\alpha|^2 \operatorname{Id} + |\beta|^2 \operatorname{Id} + \alpha \overline{\beta}A_1A_2^*+\overline{\alpha}\beta A_1^*A_2\\
&=& \operatorname{Id} + \alpha \overline{\beta}A_1A_2^*+\overline{\alpha}\beta A_1^*A_2.\
\end{eqnarray*}
This holds for any $\alpha$ and $\beta$ for which $\mathbf{x}_2 \nparallel \mathbf{w}$. From this it is easy to conclude that $$A_1A_2^*=A_1^*A_2=0,$$
which cannot happen, since $A_1$ and $A_2$ are unitary. The contradiction comes from the assumption $\mathbf{x}_2 \nparallel \mathbf{x}_3$.
\end{proof}

This Proposition gives us the complex case of Theorem \ref{teorema_principal}.



\section{Multilinear forms: The Real Case}\label{seccion real}

\begin{prop}\label{prop ida caso real} Let $\mathcal H$ be a real Hilbert space. If a nonzero $T\in  \mathcal L _s(^k\mathcal H)$  attains its norm at $(\mathbf{x}_1,\ldots,\mathbf{x}_k)$, then
$$\dim (\operatorname{span}\{\mathbf{x}_1,\ldots,\mathbf{x}_k\})\leq 2.$$
\end{prop}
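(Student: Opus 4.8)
The plan is to reduce to $k=3$ by induction on $k$, and then derive a contradiction from the assumption that three of the vectors are linearly independent.

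For the inductive step, suppose $k\ge 4$ and that the statement holds for $(k-1)$-linear forms. Normalize so that $\|T\|=1$ and $T(\mathbf{x}_1,\ldots,\mathbf{x}_k)=1$, and freeze any one variable, say the last: the symmetric $(k-1)$-linear form $S:=T(\,\cdot\,,\ldots,\,\cdot\,,\mathbf{x}_k)$ has $\|S\|\le 1$ and $S(\mathbf{x}_1,\ldots,\mathbf{x}_{k-1})=1$, so it attains its norm there and, by the inductive hypothesis, $\dim\operatorname{span}\{\mathbf{x}_1,\ldots,\mathbf{x}_{k-1}\}\le 2$. Repeating with each fixed variable shows that \emph{every} $(k-1)$-element subset spans a subspace of dimension at most $2$. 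Since $k-1\ge 3$, every triple $\{\mathbf{x}_i,\mathbf{x}_j,\mathbf{x}_l\}$ sits inside such a subset and is therefore dependent; hence no three of the $\mathbf{x}_i$ are independent, which is exactly $\dim\operatorname{span}\{\mathbf{x}_1,\ldots,\mathbf{x}_k\}\le 2$. So it suffices to treat $k=3$.

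For $k=3$ I would argue by contradiction: assume $\mathbf{x}_1,\mathbf{x}_2,\mathbf{x}_3$ are linearly independent, restrict $T$ to their $3$-dimensional span, and normalize $\|T\|=1$, $T(\mathbf{x}_1,\mathbf{x}_2,\mathbf{x}_3)=1$. The first ingredient is first-order optimality: for each $i$, the functional obtained by freezing the other two variables has norm $\le 1$ and value $1$ at the unit vector $\mathbf{x}_i$, so by Riesz representation it equals $\langle\,\cdot\,,\mathbf{x}_i\rangle$. Writing $M(\mathbf{u})$ for the symmetric operator of the bilinear form $T(\,\cdot\,,\,\cdot\,,\mathbf{u})$, this says $M(\mathbf{x}_3)$ interchanges $\mathbf{x}_1$ and $\mathbf{x}_2$ (and cyclically). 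Applying Lemma~\ref{lema bilineales} to $T(\mathbf{x}_1,\,\cdot\,,\,\cdot\,)$ on $\operatorname{span}\{\mathbf{x}_2,\mathbf{x}_3\}$ gives an orthonormal pair $\mathbf{f}_1,\mathbf{f}_2$ with $T(\mathbf{x}_1,\mathbf{f}_1,\mathbf{f}_1)=1$, $T(\mathbf{x}_1,\mathbf{f}_2,\mathbf{f}_2)=-1$, $T(\mathbf{x}_1,\mathbf{f}_1,\mathbf{f}_2)=0$; as $\|T\|=1$, the form also attains its norm at $(\mathbf{x}_1,\mathbf{f}_1,\mathbf{f}_1)$ and $(\mathbf{x}_1,\mathbf{f}_2,\mathbf{f}_2)$, so $M(\mathbf{f}_1)\mathbf{f}_1=\mathbf{x}_1$, $M(\mathbf{f}_1)\mathbf{x}_1=\mathbf{f}_1$, $M(\mathbf{f}_2)\mathbf{f}_2=-\mathbf{x}_1$, $M(\mathbf{f}_2)\mathbf{x}_1=-\mathbf{f}_2$.

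Let $\mathbf{p}$ be a unit normal to $\operatorname{span}\{\mathbf{f}_1,\mathbf{f}_2\}$ and write $\mathbf{x}_1=\alpha\mathbf{f}_1+\beta\mathbf{f}_2+\gamma\mathbf{p}$ with $\gamma\ne 0$ (independence). Expanding the relations above in the basis $\{\mathbf{f}_1,\mathbf{f}_2,\mathbf{p}\}$ and invoking the full symmetry of $T$ pins down every value of $T$ on this space except $T(\mathbf{p},\mathbf{p},\mathbf{p})=:\delta$; for instance $T(\mathbf{p},\mathbf{p},\mathbf{f}_1)=-\alpha$, $T(\mathbf{p},\mathbf{p},\mathbf{f}_2)=\beta$ and $T(\mathbf{f}_1,\mathbf{f}_1,\mathbf{p})=\gamma=-T(\mathbf{f}_2,\mathbf{f}_2,\mathbf{p})$. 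Using the isometry $\|T\|=\sup_{\|\mathbf{w}\|=1}|T(\mathbf{w},\mathbf{w},\mathbf{w})|$ of~\eqref{eq-isometria}, I would then contradict $\|T\|=1$ by exhibiting a unit vector where the cubic $p(\mathbf{w})=T(\mathbf{w},\mathbf{w},\mathbf{w})$ exceeds $1$. Restricting $p$ to the great circle through $\mathbf{f}_1$ and $\mathbf{x}_1$ yields $\alpha\cos 3t+3\sqrt{1-\alpha^2}\,\sin t\cos^2 t+K\sin^3 t$ with $|K|\le 1$; the obstruction is the term $3\gamma\,w_3(w_1^2-w_2^2)$, which, since $\gamma\ne 0$, drives the supremum above $1$.

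The hard part will be exactly this last maximization. A single slice can touch $1$ in a borderline configuration (the profile can degenerate to $\sin 3t$), so to clinch it the argument must either play the two coordinate slices $\{w_2=0\}$ and $\{w_1=0\}$ against each other or exploit the realizability constraint linking $\delta,\gamma,\beta$ (together with $|\delta|\le 1$) that excludes those borderline cases. By contrast, the purely linear/quadratic consequences of norm attainment turn out to be self-consistent in dimension $3$, so the genuine cubic input of~\eqref{eq-isometria} is unavoidable.
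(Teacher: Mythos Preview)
Your reduction to $k=3$ and the first-order analysis are fine and mirror the paper's setup. The gap is that you never carry out the ``hard part'': you outline a strategy (restrict the cubic to great circles and play two slices against each other) but do not prove that the supremum actually exceeds~$1$. As you yourself note, a single slice can degenerate to $\sin 3t$, so nothing in the proposal rules out $\|T\|=1$. Without that computation the argument is incomplete.

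More importantly, your closing claim that ``the purely linear/quadratic consequences of norm attainment turn out to be self-consistent in dimension $3$'' is wrong, and the paper's proof exploits precisely this. In your notation, once you know $T(\,\cdot\,,\mathbf{f}_1,\mathbf{f}_1)=\langle\,\cdot\,,\mathbf{x}_1\rangle$, $T(\,\cdot\,,\mathbf{f}_2,\mathbf{f}_2)=-\langle\,\cdot\,,\mathbf{x}_1\rangle$, $T(\,\cdot\,,\mathbf{f}_i,\mathbf{x}_1)=\pm\langle\,\cdot\,,\mathbf{f}_i\rangle$ and $T(\,\cdot\,,\mathbf{p},\mathbf{x}_1)=t\langle\,\cdot\,,\mathbf{p}\rangle$ for some $|t|\le 1$, consider the \emph{linear} functional
\[
T(\,\cdot\,,\mathbf{x}_1+\mathbf{f}_1,\mathbf{x}_1+\mathbf{f}_1)
=\langle\,\cdot\,,(2+2\alpha)\mathbf{f}_1+(1+t)\gamma\,\mathbf{p}\rangle,
\]
whose norm is at most $\|\mathbf{x}_1+\mathbf{f}_1\|^2=2+2\alpha$; since $\gamma\ne 0$ this forces $t=-1$. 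The same computation with $\mathbf{f}_2$ in place of $\mathbf{f}_1$ gives $t=1$, a contradiction. No appeal to~\eqref{eq-isometria} or to any cubic maximization is needed: the obstruction already lives at the level of Riesz vectors of the bilinear slices, and the trick is to evaluate them at the non-unit argument $\mathbf{x}_1+\mathbf{f}_i$ rather than only at basis vectors.
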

\begin{proof}
As in the proof of Proposition \ref{prop caso complejo}, let us assume that $k=3$ and $\Vert T\Vert=1$. Let us suppose that $T$ attains its norm at $(\mathbf{x}_1,\mathbf{x}_2,\mathbf{x}_3)$ with
$\dim (\operatorname{span}\{\mathbf{x}_1,\mathbf{x}_2,\mathbf{x}_3\})=3$
and arrive to a contradiction. To simplify the notation we also assume that $\HH=\operatorname{span}\{\mathbf{x}_1,\mathbf{x}_2,\mathbf{x}_3\}=\zR^3$.

By Lemma \ref{lema bilineales} applied to $T(\,\,\cdot\,\,, \,\,\cdot\,\,, \mathbf{x}_3)$, there is an orthonormal basis $\{\mathbf{e}_1,\mathbf{e}_2\}$ of $\operatorname{span}\{\mathbf{x}_1,\mathbf{x}_2\}$ such that
$$\begin{array}{c c c}
T(\mathbf{e}_1,\mathbf{e}_1,\mathbf{x}_3)& =&\,\,\,\,1\\
T(\mathbf{e}_2,\mathbf{e}_2,\mathbf{x}_3)&=&-1\\
T(\mathbf{e}_1,\mathbf{e}_2,\mathbf{x}_3)&=&\,\,\,\,\,\, 0.\
\end{array}$$
We extend $\{\mathbf{e}_1,\mathbf{e}_2\}$ to an orthonormal basis $E=\{\mathbf{e}_1,\mathbf{e}_2,\mathbf{e}_3\}$ of $\mathcal H$ and, for simplicity, suppose $E$ is the canonical basis. Given that $\mathbf{x}_3\notin \operatorname{span}\{\mathbf{x}_1,\mathbf{x}_2\}=\operatorname{span}\{\mathbf{e}_1,\mathbf{e}_2\}$ we can write $\mathbf{x}_3=(\alpha, \beta,\gamma)$ with $\gamma \neq 0.$

Since the norm of the linear function $T(\,\,\cdot\,\,, \mathbf{e}_1,\mathbf{x}_3)$ is at most one and
$$T(\,\,\cdot\,\,, \mathbf{e}_1,\mathbf{x}_3)(\mathbf{e}_1)=T(\mathbf{e}_1, \mathbf{e}_1,\mathbf{x}_3)=1,$$
we must have
\begin{equation}\label{ecuacion e1 x3}
T(\,\,\cdot\,\,, \mathbf{e}_1,\mathbf{x}_3)=\langle \,\,\cdot\,\, , \mathbf{e}_1\rangle.
\end{equation}
Using similar arguments we have that
\begin{eqnarray}
T(\,\,\cdot\,\,, \mathbf{e}_1,\mathbf{e}_1)&=&\langle \,\,\cdot\,\, , \mathbf{x}_3\rangle \label{ecuacion e1 e1} \\
T(\,\,\cdot\,\,, \mathbf{e}_2,\mathbf{x}_3)&=&\langle \,\,\cdot\,\, , -\mathbf{e}_2\rangle \label{ecuacion e2 x3}\\
T(\,\,\cdot\,\,, \mathbf{e}_2,\mathbf{e}_2)&=&\langle \,\,\cdot\,\, , -\mathbf{x}_3\rangle.\label{ecuacion e2 e2}
\end{eqnarray}
Finally, using \eqref{ecuacion e1 x3} and \eqref{ecuacion e2 x3}, we have
$$T(\,\,\cdot\,\,, \mathbf{e}_3,\mathbf{x}_3)(\mathbf{e}_1)=T(\mathbf{e}_1, \mathbf{e}_3,\mathbf{x}_3)=0,$$
$$T(\,\,\cdot\,\,, \mathbf{e}_3,\mathbf{x}_3)(\mathbf{e}_2)=T(\mathbf{e}_2, \mathbf{e}_3,\mathbf{x}_3)=0;$$
which means that
\begin{equation}\label{ecuacion e3 x3}
T(\,\,\cdot\,\,, \mathbf{e}_3,\mathbf{x}_3)=\langle \,\,\cdot\,\, , t\mathbf{e}_3\rangle.
\end{equation}
for some $t\in [-1,1]$.
Then, using \eqref{ecuacion e1 x3}, \eqref{ecuacion e2 x3} and \eqref{ecuacion e3 x3}, we have
\begin{equation}\label{ecuacion x3 x3}
T(\,\,\cdot\,\,, \mathbf{x}_3,\mathbf{x}_3)=T(\,\,\cdot\,\,, \mathbf{x}_3,\alpha \mathbf{e}_1+\beta\mathbf{e}_2+\gamma \mathbf{e}_3)=\langle \,\,\cdot\,\, , (\alpha, -\beta, t\gamma)\rangle.
\end{equation}
With all these equations at hand we can finally arrive to a contradiction. To do this, we consider first the linear function $T(\,\,\cdot\,\,, \mathbf{x}_3+ \mathbf{e}_1,\mathbf{x}_3+\mathbf{e}_1)$ which has norm at most $\Vert \mathbf{x}_3+ \mathbf{e}_1\Vert^2$. By \eqref{ecuacion e1 e1}, \eqref{ecuacion e2 x3} and \eqref{ecuacion x3 x3} we have
\begin{eqnarray}
T(\,\,\cdot\,\,, \mathbf{x}_3+ \mathbf{e}_1,\mathbf{x}_3+\mathbf{e}_1) &=& 2T(\,\,\cdot\,\,, \mathbf{x}_3,\mathbf{e}_1)+T(\,\,\cdot\,\,, \mathbf{x}_3,\mathbf{x}_3)+T(\,\,\cdot\,\,,  \mathbf{e}_1,\mathbf{e}_1) \nonumber \\
&=& \langle \,\,\cdot\,\, , 2\mathbf{e}_1+(\alpha, -\beta, t\gamma) +\mathbf{x}_3\rangle \nonumber \\
&=& \langle \,\,\cdot\,\, , (2+2\alpha, 0, \gamma(1+t)) \rangle. \nonumber\label{ecuacion x3+e1}\
\end{eqnarray}
Given that $\alpha^2+\beta^2+\gamma^2=\Vert \mathbf{x}_3\Vert=1$, then
\begin{eqnarray*}
|2+2\alpha|&=&\Vert \mathbf{x}_3+ \mathbf{e}_1\Vert^2  \\
&\geq & \Vert T(\,\,\cdot\,\,, \mathbf{x}_3+ \mathbf{e}_1,\mathbf{x}_3+\mathbf{e}_1)\Vert  \\
&=& \Vert \langle \,\,\cdot\,\,, (2+2\alpha,0, \gamma(1+t))\rangle \Vert \\
&=& \Vert (2+2\alpha,0, \gamma(1+t))\Vert. \\
\end{eqnarray*}
Therefore, since $\gamma \neq 0$, $t$ must be $-1$. But we can also consider the linear function
\begin{eqnarray}
\tilde T(\,\,\cdot\,\,, \mathbf{x}_3+ \mathbf{e}_2,\mathbf{x}_3+\mathbf{e}_2) &=& 2\tilde T(\,\,\cdot\,\,, \mathbf{x}_3,\mathbf{e}_2)+\tilde T(\,\,\cdot\,\,, \mathbf{x}_3,\mathbf{x}_3)+\tilde T(\,\,\cdot\,\,,  \mathbf{e}_2,\mathbf{e}_2) \nonumber \\
&=& \langle \,\,\cdot\,\, , -2\mathbf{e}_2+(\alpha, -\beta, t\gamma) -\mathbf{x}_3\rangle \nonumber \\
&=& \langle \,\,\cdot\,\, , (0, -2-2\beta, \gamma(t-1)) \rangle \nonumber\label{ecuacion x3+e2}\
\end{eqnarray}
and proceed similarly:
\begin{eqnarray*}
|2+2\beta|&=&\Vert \mathbf{x}_3+ \mathbf{e}_2\Vert^2  \\
&\geq & \Vert \tilde T(\,\,\cdot\,\,, \mathbf{x}_3+ \mathbf{e}_2,\mathbf{x}_3+\mathbf{e}_2)\Vert  \\
&=& \Vert (0, -2-\beta, \gamma(t-1))\Vert. \\
\end{eqnarray*}
This means thas $t$ must be $1$, a contradiction which comes from the assumption that $\gamma$ is nonzero.
\end{proof}

\medskip
Now we turn our attention to the other implication in Theorem~\ref{teorema_principal}: the existence of the multilinear form attaining its norm at the desired set. This implication, which is trivial in the complex case (the vectors are collinear), is rather complicated in the real case, as we will see.

\begin{lem}\label{lema polinomios norma 1} Given $n\in \zK$, the polynomials $P,Q:\zR^2 \rightarrow \zR$ defined as
$$P(x_1,x_2):= \sum_{l=0}^{\left[\frac{k}{2}\right]} \binom{k}{2l}(-1)^l x_1 ^{k-2l}x_2^{2l} \mbox{ and } Q(x_1,x_2):= \sum_{l=0}^{\left[\frac{k-1}{2}\right]} \binom{k}{2l+1}(-1)^l x_1 ^{k-(2l+1)}x_2^{2l+1} $$
have norm one.
\end{lem}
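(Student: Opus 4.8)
The plan is to recognize $P$ and $Q$ as the real and imaginary parts of the complex $k$-th power $(x_1 + i x_2)^k$, after which evaluating on the unit circle makes the computation immediate. Expanding by the binomial theorem,
$$(x_1 + i x_2)^k = \sum_{j=0}^k \binom{k}{j} i^j\, x_1^{k-j} x_2^j,$$
I would split the sum according to the parity of $j$. Using $i^{2l} = (-1)^l$ for the even indices $j = 2l$ and $i^{2l+1} = i\,(-1)^l$ for the odd indices $j = 2l+1$, the even part reproduces exactly $P(x_1, x_2)$ and the odd part reproduces $i\,Q(x_1,x_2)$. Since $P$ and $Q$ are real-valued, this yields the identification
$$(x_1 + i x_2)^k = P(x_1,x_2) + i\, Q(x_1,x_2).$$

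Next, both $P$ and $Q$ are $k$-homogeneous, so by homogeneity their norms (the suprema of $|P|$ and $|Q|$ over the unit ball of $\zR^2$) are attained on the unit sphere; it therefore suffices to evaluate them at points $(x_1,x_2) = (\cos\theta, \sin\theta)$. For such points $x_1 + i x_2 = e^{i\theta}$, whence $(x_1 + i x_2)^k = e^{i k\theta}$, and comparing real and imaginary parts gives
$$P(\cos\theta,\sin\theta) = \cos(k\theta), \qquad Q(\cos\theta,\sin\theta) = \sin(k\theta).$$

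Finally I would conclude that $\|P\| = \sup_\theta |\cos(k\theta)| = 1$ and $\|Q\| = \sup_\theta |\sin(k\theta)| = 1$, the value $1$ being attained at $\theta = 0$ and $\theta = \pi/(2k)$ respectively. There is no real obstacle in this argument: the entire content is the recognition of the complex power, and the only step deserving a word is the reduction from the ball to the sphere, which is immediate from homogeneity. I note that this also explains why these particular polynomials are the natural candidates in the real case of Theorem~\ref{teorema_principal}, since $\cos(k\theta)$ and $\sin(k\theta)$ are exactly the degree-$k$ trigonometric functions whose modulus reaches $1$.
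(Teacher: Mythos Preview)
Your proof is correct and follows essentially the same approach as the paper's own proof: both identify $P$ and $Q$ with $\operatorname{Re}\big((x_1+ix_2)^k\big)$ and $\operatorname{Im}\big((x_1+ix_2)^k\big)$, evaluate on the unit circle via $x_1+ix_2=e^{i\theta}$ to get $\cos(k\theta)$ and $\sin(k\theta)$, and then read off the norm. The paper phrases the upper bound as $|\operatorname{Re}(e^{ik\theta})|\le 1$ and checks the lower bound by evaluating at $(1,0)$, exactly as you do.
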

\begin{proof} The argument we use to prove this Lemma is  essentially the same used to see that the Chebyshev polynomials have norm one (see for example \cite{rivlin1974chebyshev} Chapter I).
The norm of $P$ can be computed as follows
$$
\Vert P\Vert =\sup\left\{\left\vert \sum_{l=0}^{\left[\frac{k}{2}\right]} \binom{k}{2l}(-1)^l \cos ^{k-2l} (\theta)\sin^{2l}(\theta) \right\vert : 0\leq \theta \leq 2\pi\right\}.
$$
Given that for any $\theta$
$$1\geq \vert \operatorname{Re}(e^{ik\theta})\vert =\left\vert \sum_{l=0}^{\left[\frac{k}{2}\right]} \binom{k}{2l}(-1)^l \cos ^{k-2l} (\theta)\sin^{2l}(\theta) \right\vert,$$
we conclude that $\Vert P\Vert \leq 1$.  For the other inequality we have
$$\Vert P\Vert \geq \vert P(1,0)\vert =1.
$$

A similar analysis, using $\operatorname{Im}(e^{ik\theta})$ instead of $\operatorname{Re}(e^{ik\theta})$, shows that $Q$ has norm one.
\end{proof}

\begin{prop}\label{prop vuelta caso real} Given $\mathbf{x}_1,\ldots,\mathbf{x}_k$  norm one vectors on a real Hilbert space $\mathcal H$ such that $\dim (\operatorname{span}\{\mathbf{x}_1,\ldots,\mathbf{x}_k\})\leq 2,$ there is a nonzero $T\in  \mathcal L _s(^k\mathcal H)$ attaining its norm at $(\mathbf{x}_1,\ldots, \mathbf{x}_k)$.
\end{prop}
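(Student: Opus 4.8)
The plan is to reduce everything to the two-dimensional situation and then write down an explicit form assembled from the two polynomials of Lemma~\ref{lema polinomios norma 1}. First I would dispose of the collinear case: if $\dim(\operatorname{span}\{\mathbf{x}_1,\ldots,\mathbf{x}_k\})=1$, then each $\mathbf{x}_j=\pm\mathbf{x}_1$ and the form $T(\mathbf{w}_1,\ldots,\mathbf{w}_k):=\prod_{j=1}^k\langle\mathbf{w}_j,\mathbf{x}_1\rangle$ has norm one and attains it at $(\mathbf{x}_1,\ldots,\mathbf{x}_k)$. So I may assume the span is exactly two-dimensional. Exactly as in the proof of Proposition~\ref{bilineales}, it then suffices to construct a suitable form $T_0$ on the span itself: precomposing $T_0$ with the orthogonal projection onto $\operatorname{span}\{\mathbf{x}_1,\ldots,\mathbf{x}_k\}$ extends it to all of $\mathcal H$ without increasing its norm (since the projection is norm-decreasing) and without altering its value at $(\mathbf{x}_1,\ldots,\mathbf{x}_k)$, because the $\mathbf{x}_j$ lie in the range. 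Fixing an orthonormal basis identifies the span with $\zR^2$.

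On $\zR^2$ I would pass to complex coordinates, identifying $\mathbf{w}=(w_1,w_2)$ with $L(\mathbf{w}):=w_1+iw_2\in\zC$, so that each unit vector is $\mathbf{x}_j=(\cos\theta_j,\sin\theta_j)$ and $L(\mathbf{x}_j)=e^{i\theta_j}$. For a phase $\psi\in\zR$ I define the real symmetric $k$-linear form
$$T_\psi(\mathbf{w}_1,\ldots,\mathbf{w}_k):=\operatorname{Re}\Big(e^{-i\psi}\prod_{j=1}^k L(\mathbf{w}_j)\Big).$$
Expanding $L(\mathbf{w})^k=(w_1+iw_2)^k$ into real and imaginary parts shows that its associated polynomial is
$$T_\psi(\mathbf{w},\ldots,\mathbf{w})=\operatorname{Re}\big(e^{-i\psi}L(\mathbf{w})^k\big)=\cos\psi\,P(\mathbf{w})+\sin\psi\,Q(\mathbf{w}),$$
with $P,Q$ precisely the polynomials of Lemma~\ref{lema polinomios norma 1}. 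This is the point where having \emph{both} $P$ and $Q$ pays off: the phase $\psi$ gives a free parameter.

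To finish I would compute the two relevant quantities. On the unit circle the above polynomial equals $\operatorname{Re}(e^{-i\psi}e^{ik\theta})=\cos(k\theta-\psi)$, so by the very same Chebyshev-type argument as in Lemma~\ref{lema polinomios norma 1} it has norm one; by the isometry \eqref{eq-isometria} this gives $\Vert T_\psi\Vert=1$ for every $\psi$. On the other hand,
$$T_\psi(\mathbf{x}_1,\ldots,\mathbf{x}_k)=\operatorname{Re}\Big(e^{-i\psi}\prod_{j=1}^k e^{i\theta_j}\Big)=\cos\Big(\sum_{j=1}^k\theta_j-\psi\Big).$$
Choosing $\psi:=\sum_{j=1}^k\theta_j$ yields $T_\psi(\mathbf{x}_1,\ldots,\mathbf{x}_k)=1=\Vert T_\psi\Vert$, so $T_\psi$ attains its norm at $(\mathbf{x}_1,\ldots,\mathbf{x}_k)$, and composing with the projection transports this back to $\mathcal H$.

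The conceptual heart, and the step I expect to be the real obstacle, is recognizing that the pair $P,Q$ produces a one-parameter family of norm-one forms whose value at the target tuple traces out $\cos(\sum_j\theta_j-\psi)$, so that the maximum can be hit exactly by tuning $\psi$; once this is seen, every individual verification is routine. The only computation requiring genuine care is confirming $\Vert T_\psi\Vert=1$ for the combination $\cos\psi\,P+\sin\psi\,Q$ rather than merely for $P$ and $Q$ separately, which the identity $\cos(k\theta-\psi)$ on the unit circle settles at once. The reduction to $\zR^2$ is elementary but indispensable, since it is only there that the angles $\theta_j$, and hence the phase trick, acquire meaning.
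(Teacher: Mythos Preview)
Your proof is correct and considerably more direct than the paper's. The paper also reduces to $\zR^2$, but then proceeds in three stages: first it shows the case where each $\mathbf{x}_j\in\{\pm\mathbf{e}_1,\pm\mathbf{e}_2\}$ by invoking $P$ or $Q$ (according to parity) from Lemma~\ref{lema polinomios norma 1}; then it climbs inductively through the dyadic angle sets $D_n=\{(\cos\theta,\sin\theta):\theta=\pi l/2^n\}$ by doubling the degree (using a suitable $2k$-linear form and then fixing $k$ of its slots); finally it reaches arbitrary angles by approximation from $\bigcup_n D_n$ and a compactness argument in $\mathcal L_s(^k\zR^2)$. Your single-shot construction $T_\psi(\mathbf{w}_1,\ldots,\mathbf{w}_k)=\operatorname{Re}\big(e^{-i\psi}\prod_j(w_{j,1}+iw_{j,2})\big)$ bypasses all three steps: the identity $(w_1+iw_2)^k=P+iQ$ makes the associated polynomial $\cos\psi\,P+\sin\psi\,Q$, the parametrization $\cos(k\theta-\psi)$ on the unit circle gives $\Vert T_\psi\Vert=1$ via \eqref{eq-isometria}, and tuning $\psi=\sum_j\theta_j$ lands exactly on the target tuple. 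What the paper's route buys is that it never leaves real arithmetic and illustrates how norm attainment propagates under the averaging $\mathbf{x}_i=\frac{\mathbf{y}_i+\mathbf{z}_i}{\Vert\mathbf{y}_i+\mathbf{z}_i\Vert}$; what your route buys is an explicit closed-form $T$ for every tuple, a transparent reason why \emph{both} $P$ and $Q$ are needed (they are the real and imaginary parts of a single complex monomial), and no approximation or compactness.
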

\begin{proof} By Proposition \ref{bilineales}, we only need to prove the case $k>2$.
To simplify notation assume that  $\mathcal H =\zR^2$. We will prove this result in three steps. In what follows, $\{\mathbf{e}_1, \mathbf{e}_2\}$ stands for the canonical basis of $\zR^2$.

\paragraph{\textbf{Step I:  $\mathbf{x}_1,\ldots,\mathbf{x}_k\in \{\mathbf{e}_1, \mathbf{e}_2,-\mathbf{e}_1, -\mathbf{e}_2\}$.}} Given that if $T$ attains its norm at $(\mathbf{x}_1,\ldots, \mathbf{x}_k)$ then $T$ also attains its norm at $(\pm\mathbf{x}_1,\ldots, \pm\mathbf{x}_k)$ it is enough to consider the case  $\mathbf{x}_1,\ldots,\mathbf{x}_k\in \{\mathbf{e}_1, \mathbf{e}_2\}$. Suppose that we have $i$ times the vector $\mathbf{e}_1$ and $j$ times the vector $\mathbf{e}_2$. We write $(\mathbf{x}_1,\ldots, \mathbf{x}_k)=(\mathbf{e}_1^i, \mathbf{e}_2^j)$, notation that  will stay with us for the rest of the proof. Suppose that either $i$ or $j$ are even numbers.  To find $T\in  \mathcal  L _s(^k\mathcal \zR^2)$ attaining its norm at $(\mathbf{e}_1^i, \mathbf{e}_2^j)$ we will work with the space of $k$-homogeneous polynomials $ \mathcal P (^k\mathcal \zR^2)$ which, as mentioned before, is isometric to $\mathcal L _s(^k\mathcal \zR^2)$ (see \eqref{eq-isometria}).

Consider  the norm one $k$-homogeneous polynomial $P:\zR^2\rightarrow \zR$ from Lemma \ref{lema polinomios norma 1}:
$$P(x_1,x_2)= \sum_{l=0}^{\left[\frac{k}{2}\right]} \binom{k}{2l}(-1)^l x_1 ^{k-2l}x_2^{2l}.$$

Using Theorem 2.2 from \cite{minc1984permanents} it is easy to see that the  norm one $k$-linear symmetric form $\widecheck{P}$ associated to $P$ attains its norm at $(\mathbf{e}_1^i, \mathbf{e}_2^j)$:
\begin{eqnarray*}
 \widecheck{P}(\mathbf{e}_1^i, \mathbf{e}_2^j) &=&  \sum_{l=0}^{\frac{k}{2}} \binom{k}{2l}(-1)^l \widecheck{(x_1 ^{k-2l}x_2^{2l})}(\mathbf{e}_1^i, \mathbf{e}_2^j)  \\
 &=&\binom{k}{j}(-1)^j \widecheck{(x_1 ^{k-2l}x_2^{2l})}(\mathbf{e}_1^i, \mathbf{e}_2^j)  \\
&=&  1.\
\end{eqnarray*}

If both $i$ and $j$ are odd number we can use polynomial $Q$ from Lemma \ref{lema polinomios norma 1}, instead of $P$.

\paragraph{\textbf{Step II:} For the second step we will consider the following increasing sequence of sets
$$D_n= \left\{(\cos \,\theta,\sin \,\theta) : 	{\theta = \frac{\pi\,l}{2^{n}}, l=1,\ldots,2^{n+1}}
\right\}.$$
In this step we prove the result for $n\in \zN$ and $\mathbf{x}_1,\ldots,\mathbf{x}_k\in D_n$.} We do this by induction on $n$. Notice that the case $n=1$ is Step I. Suppose the result holds for $n$. Given
norm one vectors $\mathbf{x}_1,\ldots,\mathbf{x}_k\in D_{n+1}$ we need to find $T\in  \mathcal L _s(^k\mathcal \zR^2)$ attaining its norm at $(\mathbf{x}_1,\ldots, \mathbf{x}_k)$. Since $\mathbf{x}_1,\ldots,\mathbf{x}_k\in D_{n+1}$, we have that for each $i$ there are $\mathbf{y}_i,\mathbf{z}_i\in D_n$ such that $\mathbf{x}_i= \frac{\mathbf{y}_i+\mathbf{z}_i}{\Vert \mathbf{y}_i+\mathbf{z}_i\Vert}$. By inductive hypothesis there is $L\in\mathcal L _s(^{2k}\mathcal \zR^2)$ attaining its norm at $(\mathbf{y}_1,\mathbf{z}_1,\ldots, \mathbf{y}_k,\mathbf{z}_k)$. Fixing all but two variable at a time it is easy to see that $L$ also attains its norm at
$$\left( \left(\frac{\mathbf{y}_1+\mathbf{z}_1}{\Vert \mathbf{y}_1+\mathbf{z}_1\Vert}\right)^2,\ldots,  \left(\frac{\mathbf{y}_k+\mathbf{z}_k}{\Vert \mathbf{y}_k+\mathbf{z}_k\Vert}\right)^2\right)=(\mathbf{x}_1^2,\ldots, \mathbf{x}_k^2).$$
Then, the $k$-linear form $L(\,\,\cdot\,\,, \ldots, \,\,\cdot\,\,, \mathbf{x}_1,\ldots,\mathbf{x}_k)$ attains its norm at $(\mathbf{x}_1,\ldots,\mathbf{x}_k)$.

\paragraph{\textbf{Step III: general case.}} Given norm one vectors $\mathbf{x}_1,\ldots,\mathbf{x}_k$ and $m\in \zN$, take  $\mathbf{x}_1^{(m)},\ldots,\mathbf{x}_k^{(m)}\in D_n$ (with $n=n(m)$) and $T_m \in  \mathcal L _s(^k\mathcal \zR^2)$ of norm one such that
\begin{itemize}
\item $\Vert \mathbf{x}_i^{(m)} - \mathbf{x}_i\Vert <\frac{1}{m}$ for $i=1,\ldots,k$.
\item $T_n(\mathbf{x}_1^{(m)},\ldots,\mathbf{x}_k^{(m)})=1$.
\end{itemize}

Take  $\{T_{m_j}\}_{j\in \zN}$  a convergent subsequence of  $\{T_{m}\}_{m\in \zN}$ and let $T\in  \mathcal L _s(^k\mathcal \zR^2)$ be its limit (note that $T$ has norm one).
It is easy to check that
$$ 1= T_{m_j}(\mathbf{x}_1^{(m_j)},\ldots,\mathbf{x}_k^{(m_j)}) \to T(\mathbf{x}_1,\ldots,\mathbf{x}_k).$$ Therefore,  $T$ attains its norm at  $(\mathbf{x}_1,\ldots,\mathbf{x}_k)$.
\end{proof}

Combining Propositions \ref{prop ida caso real} and \ref{prop vuelta caso real} we obtain the real case of Theorem \ref{teorema_principal}.



%

\end{document}